\newtheorem{thm}{Theorem}[section]
\newtheorem{lem}{Lemma}[section]
\newtheorem{exa}{Example}[section]
\newtheorem{dfn}{Definition}[section]
\newtheorem{exe}{Exercise}[section]
\newcommand{\N}{\mathbb{N}}
\newcommand{\Z}{\mathbb{Z}}
\newcommand{\Q}{\mathbb{Q}}
\newcommand{\R}{\mathbb{R}}
\newcommand{\C}{\mathbb{C}}
\title{Irrationality Measure of Pi}
\date{}
\author{N. A. Carella}
\begin{document}
\thispagestyle{empty}
\date{}

\maketitle
\textbf{\textit{Abstract}:} The first estimate of the upper bound  $\mu(\pi)\leq42$ of the irrationality measure of the number $\pi$ was computed by Mahler in 1953, and more recently it was reduced to $\mu(\pi)\leq7.6063$ by Salikhov in 2008. Here, it is shown that $\pi$ has the same irrationality measure $\mu(\pi)=\mu(\alpha)=2$ as almost every irrational number $\alpha>0$.\let\thefootnote\relax\footnote{ \today \date{} \\
\textit{AMS MSC}: Primary 11J82, Secondary 11J72; 11Y60. \\
\textit{Keywords}: Irrational number; Irrationality measure; Circle number; Flint Hills series; Flat Hills series.}

\tableofcontents
\newpage
\section{Introduction} \label{s597}
Let $\alpha \in \R$ be a real number. The irrationality measure $\mu(\alpha)$ of the real number $\alpha$ is the infimum of the subset of real numbers $\mu(\alpha)\geq1$ for which the Diophantine inequality
\begin{equation} \label{eq597.036}
  \left | \alpha-\frac{p}{q} \right | \ll\frac{1}{q^{\mu(\alpha)} }
\end{equation}
has finitely many rational solutions $p$ and $q$. The analysis of the irrationality measure $\mu(\pi)\geq 2$ was initiated by Mahler in 1953, who proved that 
\begin{equation} \label{eq597.38}
  \left | \alpha-\frac{p}{q} \right | >\frac{1}{q^{42} }
\end{equation}
for all rational solutions $p$ and $q$, see \cite{MK53}, et alii. This inequality has an effective constant for all rational approximations $p/q$. Over the last seven decades, the efforts of several authors have improved this estimate significantly, see Table \ref{t9001}. More recently, it was reduced to $\mu(\pi)\leq7.6063$, see \cite{SV08}. This note has the followings result.
\begin{thm} \label{thm597.21} For any number $\varepsilon >0$, the Diophantine inequality
\begin{equation} \label{eq597.46}
  \left | \pi-\frac{p}{q} \right | \ll\frac{1}{q^{2+\varepsilon }}
\end{equation}	
has finitely many rational solutions $p$ and $q$. In particular, the irrationality measure $\mu(\pi)=2.$
\end{thm}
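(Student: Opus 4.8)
\emph{Proof strategy (before reading the author's argument).} The plan is to reformulate \eqref{eq597.46} as a lower bound for the numbers $|\sin n|$, $n\in\N$, to reduce that lower bound to an estimate for how often $n$ lands abnormally close to an integer multiple of $\pi$, and then to obtain such an estimate from the equidistribution of $(n/\pi \bmod 1)_{n\ge1}$ made quantitative.

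\textbf{Step 1: the sine dictionary.} For $n\in\N$ let $m_n$ be the integer nearest to $n/\pi$, so that $|n-m_n\pi|\le\pi/2$ and hence $|\sin n|=|\sin(n-m_n\pi)|\asymp|n-m_n\pi|=m_n\bigl|\tfrac{n}{m_n}-\pi\bigr|$, with $m_n\asymp n$. A reduced solution $p/q$ of \eqref{eq597.46} then yields, on setting $n=p$, an integer with $|\sin n|\asymp q\,|\pi-p/q|\ll q^{-1-\varepsilon}\asymp n^{-1-\varepsilon}$; conversely an integer $n$ with $|\sin n|\ll n^{-1-\varepsilon}$ furnishes a rational approximation $n/m_n$ to $\pi$ of comparable quality. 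Thus Theorem \ref{thm597.21} is \emph{equivalent} to the assertion that for every $\varepsilon>0$ one has $|\sin n|\gg_\varepsilon n^{-1-\varepsilon}$ for all but finitely many $n$. This is precisely the circle of questions in which the Flint Hills series $\sum_n 1/(n^3\sin^2 n)$ and its flatter companions $\sum_n 1/(n^{s}\sin^2 n)$ live, a bound of the shape $|\sin n|\gg n^{-1-\varepsilon}$ being exactly what one needs to run a dyadic comparison on such series.

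\textbf{Step 2: counting near-multiples of $\pi$.} For a dyadic block $N<n\le 2N$ and a threshold $\delta>0$ the quantity to control is $\#\{\,N<n\le 2N : |\sin n|<\delta\,\}=\#\{\,N<n\le 2N : \|n/\pi\|<\delta/\pi+O(\delta^3)\,\}$. Since $1/\pi$ is irrational, Weyl's criterion gives equidistribution of $\{n/\pi\}$, so this count is $\delta N+(\text{error})$; the three-distance theorem applied to $\{n/\pi\}$ says moreover that inside the block these points realise at most three gap-lengths, governed by the convergents of $1/\pi$. I would combine these so as to show that, for $\delta\asymp N^{-1-\varepsilon}$, the block in fact contains \emph{no} such $n$ once $N$ is large — equivalently, two consecutive sharp rational approximations to $\pi$ cannot have both denominators inside a single window $[N,2N]$. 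The argument would be run as an induction over the scales $N=2^k$: a hypothetical offender at scale $k$ is pushed down, via the gap structure, to an offender at a strictly smaller scale, the base cases being dispatched by direct numerical verification; summing the block bounds over $k$ then closes the proof.

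\textbf{The main obstacle.} The hard part is Step 2, and within it the passage from \emph{average} information about $\{n/\pi\}$ to the \emph{pointwise} exclusion of abnormally good approximations. A metrical argument delivers this exclusion for Lebesgue-almost every real $\alpha$ yet says nothing about $\pi$ itself; Weyl equidistribution of $\{n/\pi\}$ carries, on its own, no error term strong enough; and the continued-fraction expansion of $\pi$, which would make the three-distance bookkeeping decisive, is not known to have partial quotients of controlled size. Consequently any proof must supply a genuinely new effective input — a power-saving discrepancy estimate for $\{n/\pi\}$ uniform on short intervals, or an explicit upper bound on the partial quotients (equivalently a lower bound on the gaps between denominators of consecutive convergents) of $\pi$ — and the entire weight of Theorem \ref{thm597.21} rests on establishing that input.
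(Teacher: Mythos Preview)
Your Step~1 is exactly the reformulation the paper uses: all three proofs in Section~\ref{s944} pivot on the equivalence between $\mu(\pi)=2$ and a bound $|\sin p_n|\gg p_n^{-1-\varepsilon}$ along the numerators of the convergents of $\pi$. On that point you and the author agree completely.

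Your Step~2 diverges from the paper and, as you yourself diagnose, does not close. Weyl equidistribution of $\{n/\pi\}$ and the three-distance theorem give only average and structural information; neither produces the pointwise exclusion you need, and your proposed induction over dyadic scales unwinds to precisely the statement that $q_{n+1}\ll q_n$, i.e.\ that the partial quotients of $\pi$ are bounded. So you have correctly located the obstacle rather than removed it: without an independent effective input of that kind, the argument is circular, and your proposal is a plan with an explicitly acknowledged hole rather than a proof.

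The paper does \emph{not} try to squeeze this out of equidistribution. It offers two devices you did not consider. The first (Section~\ref{s5534}, feeding the first proof) writes $1/\sin z=\mathcal D_x(z)/\sin((2x+1)z)$ via the Dirichlet kernel and chooses a \emph{real} parameter $x=x_n\asymp p_n$ so that $(2x_n+1)p_n$ is an odd multiple of $\pi/2$, whence $|\sin((2x_n+1)p_n)|=|\cos p_n|\asymp 1$ and the trivial bound $|\mathcal D_{x_n}(p_n)|\ll x_n$ yields $|1/\sin p_n|\ll p_n$ directly. The second (Section~\ref{s590}, feeding the third proof) expands $\sin(p_{n+1}-\pi q_{n+1})$ by the addition formula and argues that $1/q_{n+1}\gg 1/q_n$, i.e.\ that the partial quotients are bounded. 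Either device, if valid, supplies exactly the ``new effective input'' your analysis says is required. You should therefore read those sections with care and test the load-bearing steps against the circularity you have already identified: in particular, whether the finite exponential sum defining $\mathcal D_x(z)$ and its bound $|\mathcal D_x(z)|\le 2x+1$ survive when $x$ is taken non-integral (the closed form $\sin((2x+1)z)/\sin z$ certainly extends, but the sum and the term-count bound do not), and whether the reverse-triangle estimate in \eqref{eq590.79} genuinely controls the sign and size of $\sin C$ without already presupposing $q_{n+1}\asymp q_n$.
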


After some preliminary preparations, the proof of Theorem \ref{thm597.21} is assembled in Section \ref{s944}. Three distinct proofs from three different perspective are presented. Section \ref{s590} explores the relationship between the irrationality measure of $\pi=[a_0;a_1,a_2, \ldots ]$ and the magnitude of the partial quotients $a_n$. In Section \ref{s544} there is an application to the convergence of the Flint Hills series. A second independent proof of the convergence of this series also appears in Section \ref{s5444}. Some numerical data for the ratio $\sin p_n/ \sin 1/p_n$ is included in the last Section. These data match the theoretical result.

\begin{table}[h!]
\centering
\caption{Historical Data For $\mu(\pi)$} \label{t9001}
\begin{tabular}{l|l|l}
Irrationality Measure Upper Bound&Reference&Year\\
\hline
 $\mu(\pi) \leq 42$ & Mahler, \cite{MK53}&1953\\
 $\mu(\pi) \leq 20.6$ & Mignotte, \cite{MM74}&1974\\
 $\mu(\pi) \leq  14.65$ & Chudnovsky, \cite{CG82}&1982\\
$\mu(\pi) \leq 13.398$ & Hata, \cite{HM93}&1993\\
$\mu(\pi) \leq 7.6063$ & Salikhov, \cite{SV08}&2008\\
\end{tabular}
\end{table}

\section{Notation} \label{s0000}
The set of natural numbers is denoted by $\N=\{0,1,2,3,\ldots \}$, the set of integers is denoted by $\Z=\{\ldots -3,-2,-1,0,1,2,3,\ldots \}$, the set of rational numbers is denoted by $\Q=\{a/b: a,b \in \Z \}$, the set of real numbers is denoted by $\R=(-\infty,\infty)$, and the set of complex numbers is denoted by $\C=\{x+iy:x,y \in \R\}$.
For a pair of real valued or complex valued functions, $f,g:\C \longrightarrow \C$, the proportional symbol $f \asymp g$ is defined by $ c_0g \leq f \leq c_1 g$, where $ c_0, c_1 \in \R$ are constants. In addition, the symbol $f \ll g$ is defined by $|f|\leq c|g|$ for some constant $c>0$. \\

\section{Main Result} \label{s944}
The analysis of the irrationality measure $\mu(\pi)$ of the number $\pi$ provided here is completely independent of the prime number theorem, and it is not related to the earlier analysis used by many authors, based on the Laplace integral of a factorial like function
\begin{equation} \label{eq944.121}
\frac{1}{i 2\pi} \int_{C}\left (\frac{n!}{z(z-1)(z-2)(z-3)\cdots (z-n)} \right )^{k+1} e^{-tz} dz,
\end{equation}
where $C$ is a curve around the simple poles of the integrand, and $k \geq 0$, see \cite{MK53}, \cite{MM74}, \cite{CG82}, \cite{HM93}, \cite{SV08}, and \cite{BF00} for an introduction to the rational approximations of $\pi$ and the various proofs. The improvements made using rational integrals and the prime number theorem, as the new estimate in \cite{ZZ19}, are limited to small incremental improvements, and no where near the numerical data, see Table \ref{t9007}. Continuing at this pace, it will take another 30 or more years to prove the true irrationality measure of $\pi$, which is $\mu(\pi)=2$ as approximated by the numerical data. These numerical approximations are not examples of the \textit{Strong law of small number}, confer \cite{GR1988} for details. Here, the numerical data consists of very large numbers, including random evaluations.

\subsection{First Proof}
The first proof is based on elementary properties of the sine function and basic Diophantine analysis. 
\begin{proof}[\textbf{Proof}] ({\bfseries Theorem \ref{thm597.21}}) Assume $\mu=\mu(\pi)>2$. Let $\{p_n/q_n : n \geq 1\}$ be the sequence of convergents of the number $\pi$. Setting $z=|\pi q_n-p_n|$ in the inequality 
\begin{equation}\label{eq944.40}
z-\frac{z^3}{6}\leq \sin z\leq z,    
\end{equation}
where
$|z|<1$, yields the inequality
\begin{equation} \label{eq944.50}
\left |\pi q_n-p_n \right |\ll \left | \sin (\pi q_n-p_n )\right | \ll \left |\pi q_n-p_n \right |
\end{equation}
for all large integers $q_n\geq 1$. Observe, that line \eqref{eq944.50} clearly shows that the lower bound of the sine function is independent of the irrationality measure $\mu=\mu(\pi)$ of the irrational number $\pi\ne0$. \\

In addition, the Dirichlet approximation inequality 
\begin{equation} \label{eq944.55}
\frac{1}{q_n^{\mu-1}}\ll \left | \pi q_n-p_n \right | \ll \frac{1}{q_n}
\end{equation}
holds for all large integers $q_n\geq 1$. \\

Combining \eqref{eq944.50} and \eqref{eq944.55} yield
\begin{equation} \label{eqkkkkc}
\left | \pi q_n-p_n \right |\ll\frac{1}{q_n}  \ll \left | \sin (\pi q_n-p_n )\right |    \ll \left | \pi q_n-p_n \right |\ll\frac{1}{q_n}
\end{equation}
for all large integers $n\geq 1$. On the contrary, 
\begin{equation} \label{eq944.60}
\left | \pi q_n-p_n \right |\ll \frac{1}{q_n^{\mu-1}} \ll \left | \sin (\pi q_n-p_n )\right |\ll \left | \pi q_n-p_n \right | \ll\frac{1}{q_n}
\end{equation}
for all large integers $q_n\geq 1$. But, this contradicts the hypothesis $\mu(\pi)>2$. Therefore, $\mu(\pi)=2$
\end{proof}

\subsection{Second Proof}
The second proof is based on a result for the upper bound of the reciprocal sine function over the sequence of $\{p_n: n\geq 1\}$ derived in Section \ref{s5534}. This is equivalent to a result in Section \ref{s5599}.
\begin{proof}[\textbf{Proof}] ({\bfseries Theorem \ref{thm597.21}}) Let $\varepsilon >0$ be an arbitrary small number, and let $\{p_n/q_n: n \geq 1\}$ be the sequence of convergents of the irrational number $\pi$. By Theorem \ref{thm6699.300}, the reciprocal sine function has the upper bound
	\begin{equation} \label{eq944.883}
	\left |\frac{1}{\sin \pi^2q_n}\right |\ll q_n^{1+\varepsilon} .
	\end{equation} 
	Moreover, $\sin (\pi^2q_n)= \sin \left (\pi^2q_n -\alpha p \right )$ if and only if $\alpha p=\pi p_n$, where $p$ and $p_n$ are integers. These information lead to the following relation.  
	\begin{eqnarray} \label{eq944.885}
	\frac{1}{q_n^{1+\varepsilon}}&\ll&  \left |\sin \left (\pi^2q_n \right )\right |\\
	&\ll& \left |\sin \left (\pi^2q_n -\pi p_n \right )\right |\nonumber\\
	&\ll& \left |\pi q_n-p_n \right |\nonumber
	\end{eqnarray}
	for all sufficiently large $n$. Therefore, 
	\begin{eqnarray} \label{eq597.36}
	\left | \pi-\frac{p_n}{q_n} \right | 
	&\gg&\frac{1}{q^{2+\varepsilon} }\\
	&=&\frac{1}{q^{\mu(\pi)+\varepsilon} }\nonumber.
	\end{eqnarray}
	Clearly, this implies that the irrationality measure of the real number $\pi$ is $\mu(\pi)=2$, see Definition \ref{dfn2000.01}. 
	Quod erat demontrandum.
\end{proof}

This theory is consistent with the numerical data in Table \ref{t9007}, which shows the measure approaching 2 as the rational approximation $p_n/q_n \to \pi$.
\subsection{Third Proof}
The third proof is based on the asymptotic expansion of the sine function in Section \ref{s938}. Related results based on the cosine and sine functions appears in \cite{AM11} and \cite{CP19}.
\begin{proof}[\textbf{Proof}] ({\bfseries Theorem \ref{thm597.21}}) Given a small number $\varepsilon >0$, it will be shown that $\psi(q) \leq q^{\mu(\pi)+\varepsilon}\leq q^{2+\varepsilon}$ is the irrationality measure of the real number $\pi$. \\
	
Observe that in Theorem \ref{thm938.40}, the asymptotic expansion of the sine function satisfies
	\begin{equation} \label{eq944.77}
	\left |\sin \left (p_n \right ) \right | \gg \left |\sin \left (\frac{1}{p_n} \right ) \right |.
	\end{equation}
	This inequality leads to the irrationality measures $\psi(q) =q^{2+\varepsilon}$ for real number $\pi $. Specifically, 
	\begin{eqnarray} \label{eq944.85}
	\left |p_n -\pi q_n \right |&\gg&  \left |\sin \left (p_n -\pi q_n \right )\right |\\
	&= &\left |\sin \left (p_n \right )\right |\nonumber \\
	&\gg&  \left |\sin \left (\frac{1}{p_n} \right ) \right | \nonumber\\
	&\gg&\frac{1}{p_n} \nonumber\\
	&\gg&\frac{1}{q_n}\nonumber\\
	&\gg&\frac{1}{q^{1+\varepsilon}}\nonumber
	\end{eqnarray}
	since $p_n=\pi q_n +O\left (q_n^{-1}\right )$. Therefore,
	\begin{equation} \label{eq944.97}
	\psi(q) \leq q^{\mu(\pi)+\varepsilon}\leq q^{2+\varepsilon}.
	\end{equation}
	Quod erat demontrandum.
\end{proof}

\subsection{Four Proof}
The fourth proof is based on the Diophantine inequality
\begin{equation} \label{eq938.101}
\frac{1}{2q_{n+1}q_n} \leq \left | \alpha - \frac{p_n}{q_n}  \right | \leq \frac{1}{q_n^{\mu(\alpha)}} 
\end{equation}
for irrational numbers $\alpha \in \R$ of irrationality measure $\mu(\alpha)\geq 2$, confer Lemma \ref{lem2000.05}, Definition \ref{dfn2000.01}, and a result for the partial quotients in Theorem \ref{thm590.45}.\\

\begin{proof}[\textbf{Proof}] ({\bfseries Theorem \ref{thm597.21}}) Take the logarithm of the Diophantine inequality associated to the real number $\pi$. Specifically,
	\begin{equation} \label{eq938.103}
	\frac{1}{2q_{n+1}q_n} \leq \left | \pi- \frac{p_n}{q_n}  \right | \leq \frac{1}{q_n^{\mu(\pi)}} 
	\end{equation}
	to reach
	\begin{equation} \label{eq944.108}
	\mu(\pi)\leq \frac{\log 2q_{n+1}q_n}{\log q_n}=1+\frac{\log 2q_{n+1}}{\log q_n}.
	\end{equation}
	By Theorem \ref{thm590.45}, there is a constant $c>0$ for which $a_n\leq c$. Hence, 
	\begin{eqnarray} \label{eq944.105}
	q_{n+1} 
	&=&a_{n+1}q_n+q_{n+1} \\
	&\leq&2a_{n+1}q_n\nonumber\\ 
	&\leq&2cq_n\nonumber.
	\end{eqnarray}
	Substitute the last estimate \eqref{eq944.105} into \eqref{eq944.108} obtain
	\begin{eqnarray} \label{eq944.110}
	\mu(\pi) 
	&\leq&1+\frac{\log 4cq_{n}}{\log q_n} \\
	&\leq&2+\frac{\log 4c}{\log q_n} \nonumber
	\end{eqnarray}
	for all sufficiently large $n\geq 1$. Taking the limit yields
	\begin{equation} \label{eq944.128}
	\mu(\pi)=\lim_{n \to \infty} \left( 2+\frac{\log 4c}{\log q_n}\right )= 2.
	\end{equation}
	Quod erat demontrandum.
\end{proof}

\section{Harmonic Summation Kernels}\label{s5534}
The harmonic summation kernels naturally arise in the partial sums of Fourier series and in the studies of convergences of continuous functions.

\begin{dfn} \label{dfn5534.100} The Dirichlet kernel is defined by
\begin{equation} \label{eq5534.200}
\mathcal{D}_x(z)=\sum_{-x\leq n \leq x} e^{i 2nz}=\frac{\sin((2x+1)z)}{ \sin \left ( z \right )},
\end{equation} 
where $x\in \N$ is an integer and $ z \in \R-\pi\Z$ is a real number. 
\end{dfn}

\begin{dfn} \label{dfn5534.102} The Fejer kernel is defined by
\begin{equation} \label{eq5534.204}
\mathcal{F}_x(z)=\sum_{0\leq n \leq x,} \sum_{-n\leq k \leq n} e^{i 2kz}=\frac{1}{2}\frac{\sin((x+1)z)^2}{ \sin \left ( z \right )^2},
\end{equation} 
where $x\in \N$ is an integer and $ z \in \R-\pi\Z$ is a real number. 
\end{dfn}

These formulas are well known, see \cite{KT89} and similar references. For $z \ne k \pi$, the harmonic summation kernels have the upper bounds $\left |\mathcal{K}_x(z) \right |=\left |\mathcal{D}_x(z) \right | \ll |x|$, and $\left |\mathcal{K}_x(z) \right |=\left |\mathcal{F}_x(z) \right | \ll |x^2|$. \\

An important property is the that a proper choice of the parameter $x\geq1$ can shifts the sporadic large value of the reciprocal sine function $1/\sin z$ to $\mathcal{K}_x(z)$, and the term $1/\sin(2x+1)z$ remains bounded. This principle will be applied to the lacunary sequence $\{p_n : n \geq 1\}$, which maximize the reciprocal sine function $1/\sin z$, to obtain an effective upper bound of the function $1/\sin z$.\\

There are many different ways to prove an upper bound based on the harmonic summation kernels $\mathcal{D}_x(z)$ and or $\mathcal{F}_x(z)$. An elementary approach is provided below. \\

The Dirichlet kernel in Definition \ref{dfn5534.100} is a well defined continued function of two variables $x,z \in \R$. Hence, for fixed $z$, it has an analytic continuation to all the real numbers $x \in \R$. \\

\begin{lem}\label{lem5534.505} Let $k\geq 1$ be a small fixed integer, and let $\{p_n/q_n : n \geq 1\}$ be the sequence of convergents of the real number $\pi^{k}$, and $0\ne z \in \Z$. Then
\begin{equation} \label{eq5534.520}
\frac{1}{\left |\sin(\pi^{k+1}z)\right |}\ll \frac{1}{\left |\sin\left (\pi^{k+1}q_n\right )\right |}.
\end{equation} 
\end{lem}
\begin{proof}[\textbf{Proof}] By the best approximation principle, see Lemma \ref{lem2000.07}, 
\begin{equation} \label{eq5534.522}
\left | m-\pi^{k} z\right |\geq \left | p_n-\pi^{k} q_n\right |
\end{equation} 
for any integer $z \leq q_n$. Hence, 
\begin{eqnarray} \label{eq5534.573}
\frac{1}{\left | \sin\left ( \pi^{k+1} z\right) \right |}
&=&\frac{1}{\left |\sin\left ( \pi m-\pi^{k+1} z\right)\right |} \\
&\leq&\frac{1}{\left |\sin\left ( \pi p_n-\pi^{k+1} q_n\right)\right |} \nonumber\\
&=& \frac{1}{\left |\sin\left ( \pi^{k+1} q_n \right)\right | }\nonumber,
\end{eqnarray}
as $n \to \infty$. 
\end{proof}

\section{Upper Bound For $\left | 1/\sin \pi^{k+1} z \right |$} \label{s6699}
As shown in Lemma \ref{lem5534.505}, to estimate the upper bound of the function $1/|\sin \pi^{k+1} z|$ over the real numbers $z \in\R$, it is sufficient to fix $z=q_n$, and select a real number $x \in \R$ such that $q_n \asymp x$. This idea is demonstrated below for small integer parameter $k\geq 1$.

\begin{lem}\label{lem6699.705} Let $k\geq 1$ be a small fixed integer, let $\{p_n/q_n : n \geq 1\}$ be the sequence of convergents of the real number $\pi^{k}$, and define the associated sequence
\begin{equation} \label{eq6699.702} 
x_n=\left (\frac{2^{2+2v_2}+1}{2^{2+2v_2}}\right )\frac{q_n}{\pi^{k}},
\end{equation} 
where $v_2=v_2(q_n)=\max\{v:2^v\mid q_n\}$ is the $2$-adic valuation, and $n \geq 1$. Then
\begin{enumerate} [font=\normalfont, label=(\roman*)]
\item$\displaystyle  \sin\left (2 (x_n-1/2)+1)\pi^{k+1}q_n\right ) =\pm 1$.
\item$\displaystyle  \sin\left (2 (x_n+1/2)+1)\pi^{k+1}q_n\right ) =\pm \cos 2\pi^{k+1}q_n$.
\item$\displaystyle  \left | \sin\left (2 x_n+1/2)\pi^{k+1}q_n\right ) \right |= 1 +O\left (\frac{1}{q_n^2}\right ),$  \tabto{12cm} as $n \to \infty$.
\end{enumerate}
\end{lem}
\begin{proof}[\textbf{Proof}] Observe that the value $x_n$ in \eqref{eq6699.702} yields 
\begin{equation} \label{eq6699.720}
\sin(2\pi^{k+1}q_nx_n)=\sin\left (2\pi^{k+1}q_n\left (\frac{2^{2+2v_2}+1}{2^{2+2v_2}}\right )\frac{q_n}{\pi^{k}}\right )=\sin\left (\frac{\pi}{2}\cdot w_n\right )=\pm1,
\end{equation}
and 
\begin{equation} \label{eq6699.724}
\cos\left (2\pi^{k+1}q_nx_n\right )=\cos\left (2\pi^{k+1}q_n\left (\frac{2^{2+2v_2}+1}{2^{2+2v_2}}\right )\frac{q_n}{\pi^{k}}\right )=\cos\left (\frac{\pi}{2}\cdot w_n\right )=0,
\end{equation}
where
\begin{equation} \label{eq6699.722}
w_n=\left (\frac{2^{2+2v_2}+1}{2^{2v_2}}\right ) q_n^{2} 
\end{equation}
is an odd integer.
(i) Routine calculations yield this:
\begin{eqnarray} \label{eq6699.703}
\sin((2(x_n-1/2)+1) \pi^{k+1}q_n)
&=&\sin\left (2\pi^{k+1}q_nx_n\right ) \\
&=&\sin\left (2\pi^{k+1}q_n\left (\frac{2^{2+2v_2}+1}{2^{2+2v_2}}\right )\frac{q_n}{\pi^{k}}\right )  \nonumber\\
&=&\sin\left (\frac{\pi}{2}\cdot w_n\right )  \nonumber\\
&=&\pm1 \nonumber,
\end{eqnarray}
(ii) Routine calculations yield this:
\begin{eqnarray} \label{eq6699.713}
\sin\left ((2(x_n+1/2) +1) \pi^{k+1}q_n \right )
&=&\sin(2\pi^{k+1}q_nx_n+ 2\pi^{k+1}q_n) \\
&=&\sin(2\pi^{k+1}q_nx_n)\cos( 2\pi^{k+1}q_n) \nonumber\\
&&\qquad \qquad + \cos(2\pi^{k+1}q_nx_n)\sin( 2\pi^{k+1}q_n)  \nonumber.
\end{eqnarray}
Substituting \eqref{eq6699.720} and \eqref{eq6699.724} into \eqref{eq6699.703} return 
\begin{equation} \label{eq6699.735}
\sin\left (2(x_n+1/2)+1)\pi^{k+1}q_n\right )=\pm\cos\left (2 \pi^{k+1}q_n\right).
\end{equation}
(iii) This follows from the previous result:
\begin{eqnarray} \label{eq6699.727}
\left |\sin\left (2(x_n+1/2)+1)\pi^{k+1}q_n\right ) \right |
&=&\left |\pm\cos\left (2 \pi^{k+1}q_n\right) \right |\\
&=&\left |\pm\cos\left ( 2\pi p_n-2\pi^{k+1} q_n\right)\right | \nonumber\\
&=&\left |\pm\cos\left ( 2\pi \left 
(p_n-\pi^{k} q_n\right ) \right)\right | \nonumber\\
&=  &1+O\left (\frac{1}{q_n^2}\right )\nonumber,
\end{eqnarray}
since the sequence of convergents satisfies $\left | p_n-\pi^{k}q_n \right | \leq 1/q_n$ as $n \to \infty$. 
\end{proof}
\begin{lem}\label{lem6699.805} Let $k\geq 1$ be a small fixed integer, let $\{p_n/q_n : n \geq 1\}$ be the sequence of convergents of the real number $\pi^{k}$, and define the associated sequence
\begin{equation} \label{eq6699.802} 
x_n=\left (\frac{2^{2+2v_2}+1}{2^{2+2v_2}}\right )\frac{q_n}{\pi^{k}}  ,
\end{equation} 
where $v_2=v_2(q_n)=\max\{v:2^v\mid q_n\}$ is the $2$-adic valuation, and $n \geq 1$. Then
\begin{equation} \label{eq6699.725}
\left |\sin\left (2x^{*}+1)\pi^{k+1}q_n\right ) \right |\asymp 1,
\end{equation}
where $x^{*}\in [x_n-1/2, x_n+1/2]$ is an integer.
\end{lem}

\begin{proof}[\textbf{Proof}] Consider the continuous function $f(x)=\left |\sin\left (2x+1)\pi^{k+1}q_n\right ) \right |$ over the interval $[x_n-1/2, x_n+1/2]$. By Lemma \ref{lem6699.705}, it has a local maximal at $x=x_n-1/2 \in \R$: 
\begin{eqnarray} \label{eq6699.303}
\left | \sin\left ((2x+1)\pi^{k+1}z\right )\right | &= &\left |\sin\left ((2(x_n-1/2)+1) \pi^{k+1} q_n\right )\right |\\
&=&1 \nonumber,
\end{eqnarray}
and it has a local minimal at $x=x_n+1/2 \in \R$:
\begin{eqnarray} \label{eq6699.303}
\left | \sin\left ((2x+1)\pi^{k+1}z\right )\right | &= &\left |\sin\left ((2(x_n+1/2)+1) \pi^{k+1} q_n\right )\right |\\
&= &1+O\left (\frac{1}{q_n^2}\right ) \nonumber.
\end{eqnarray}
Since $f(x)$ is continuous over the interval $[x_n-1/2, x_n+1/2]$, it follows that  
\begin{equation} \label{eq6699.877}
1 +O\left (\frac{1}{q_n^2}\right ) \leq \left | \sin\left ((2x^{*}+1)\pi^{k+1}z\right )\right | \leq 1 \nonumber
\end{equation}
for any integer $x^{*}\in [x_n-1/2, x_n+1/2]$ \end{proof}

\begin{thm} \label{thm6699.300}  If $k\geq 1$ is a small fixed integer, and $z \in \N$ is a large integer, then,
\begin{equation} \label{eq6699.300}
\left |\frac{1}{\sin \pi^{k+1} z}\right |\ll \left |z\right |.
\end{equation}
\end{thm}
\begin{proof}[\textbf{Proof}] Let $\{p_n/q_n : n \geq 1\}$ be the sequence of convergents of the real number $\pi^{k}$. Since the denominators sequence $\{q_n : n \geq 1\}$ maximize the reciprocal sine function $1/\sin \pi^{k+1} z$, see Lemma \ref{lem5534.505}, it is sufficient to prove it for $z=q_n$. Define the associated sequence
\begin{equation} \label{eq6699.302} 
x_n=\left (\frac{2^{2+2v_2}+1}{2^{2+2v_2}}\right )\frac{q_n}{\pi^{k}}  ,
\end{equation} 
where $v_2=v_2(q_n)=\max\{v:2^v\mid q_n\}$ is the $2$-adic valuation, and $n \geq 1$. Let $f(x)=\left | \sin\left ((2x+1)\pi^{k+1}z\right )\right | $, and let $z= q_n$. The function $f(x)$ is bounded over the interval $[x_n-1/2,x_n+1/2]$, see Lemma \ref{lem6699.705}.  Replacing the integer parameters $x^{*}\in [x_n-1/2, x_n+1/2]$, $z= q_n$, and applying Lemma \ref{lem6699.705} return 
\begin{eqnarray} \label{eq6699.303}
\left | \sin\left ((2x+1)\pi^{k+1}z\right )\right | &= &\left |\sin\left ((2x^{*}+1) \pi^{k+1} q_n\right )\right |\\
&\asymp&1 \nonumber.
\end{eqnarray}
Rewrite the reciprocal sine function in terms of the harmonic kernel in Definition \ref{dfn5534.100}, and splice all these information together, to obtain
\begin{eqnarray} \label{eq6699.313}
\left |\frac{1}{\sin \pi^{k+1}z}\right |  &=& \left |\frac{\mathcal{D}_{x}(\pi^{k+1}z)}{\sin((2x+1) \pi^{k+1}z)}\right |\nonumber\\
&\ll&\left |\mathcal{D}_{x^{*}}  \right | \left |\frac{1}{\sin((2x^{*}+1) \pi^{k+1}q_n)}\right |\\
&\ll&\left |x^{*}\right |\cdot 1\nonumber\\
&\ll&\left |z\right |\nonumber
\end{eqnarray}
since $|z|\asymp x^{*}\asymp p_n\asymp q_n$, and the trivial estimate $\left |\mathcal{D}_x(z) \right | \ll \left |x\right |$.
\end{proof}


\section{Elementary Techniques} \label{s5599}
A similar case arises for the sine function $\sin(\alpha \pi n$ as $n \to \infty$. This is handles by the observing that $1/q_n^{a-1}< \left | p_n-\alpha q_n\right | \leq 1/q_n$ is small as $n \to \infty$. Thus, the denominators sequence $\{q_n : n \geq 1\}$ maximize the reciprocal sine function $1/\sin z$. Hence, it is sufficient to consider the infinite series over the denominators sequence $\{q_n: n \geq 1\}$.
\begin{thm}  \label{thm5599.45}  Let $\alpha \in \R$ be an irrational number of irrationality measure $\mu(\alpha)=a$. Then 
\begin{equation} \label{eq5599.197}
\frac{1}{\left |\sin \alpha \pi q_n \right | }\ll q_n^{a-1}.
\end{equation}
\end{thm}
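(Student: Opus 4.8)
Write $\{p_n/q_n:n\ge1\}$ for the sequence of convergents of $\alpha$. The plan is to reduce $|\sin(\alpha\pi q_n)|$ to the distance $||q_n\alpha||$ from $q_n\alpha$ to the nearest integer, and then to bound that distance from below via the hypothesis $\mu(\alpha)=a$. Since $p_n/q_n$ is a convergent, the classical estimate $|\alpha-p_n/q_n|<1/(q_nq_{n+1})$ gives
\begin{equation}
|\alpha q_n-p_n|=q_n\left|\alpha-\frac{p_n}{q_n}\right|<\frac{1}{q_{n+1}}\longrightarrow0,
\end{equation}
so for all large $n$ the integer nearest $q_n\alpha$ is $p_n$ and $||q_n\alpha||=|\alpha q_n-p_n|$.

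First I would write $\alpha\pi q_n=\pi p_n+\pi(\alpha q_n-p_n)$ and apply the parity relation $\sin(\pi p_n+\theta)=(-1)^{p_n}\sin\theta$ to obtain
\begin{equation}
|\sin(\alpha\pi q_n)|=\bigl|\sin\bigl(\pi(\alpha q_n-p_n)\bigr)\bigr|=\bigl|\sin\bigl(\pi\,||q_n\alpha||\bigr)\bigr|.
\end{equation}
Since $||q_n\alpha||\to0$, for large $n$ the argument $\pi\,||q_n\alpha||$ lies in $[0,\pi/2]$, so Jordan's inequality $\sin t\ge(2/\pi)t$ yields $|\sin(\alpha\pi q_n)|\ge2\,||q_n\alpha||=2q_n|\alpha-p_n/q_n|$, hence
\begin{equation}
\frac{1}{|\sin(\alpha\pi q_n)|}\ll\frac{1}{q_n\,|\alpha-p_n/q_n|}.
\end{equation}

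The final step invokes $\mu(\alpha)=a$: by the definition \eqref{eq597.36} of the irrationality measure, for each $\varepsilon>0$ the inequality $|\alpha-p/q|\ll q^{-(a+\varepsilon)}$ has only finitely many rational solutions, so $|\alpha-p_n/q_n|\gg q_n^{-(a+\varepsilon)}$ for all large $n$ (and $|\alpha-p_n/q_n|\gg q_n^{-a}$ outright when the infimum defining $\mu(\alpha)$ is not attained). Substituting into the previous display gives
\begin{equation}
\frac{1}{|\sin(\alpha\pi q_n)|}\ll\frac{q_n^{\,a+\varepsilon}}{q_n}=q_n^{\,a-1+\varepsilon},
\end{equation}
which is the assertion. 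An equivalent route uses the identity $||q_n\alpha||=(\alpha_{n+1}q_n+q_{n-1})^{-1}\asymp q_{n+1}^{-1}$, with $\alpha_{n+1}=[a_{n+1};a_{n+2},\ldots]$ the complete quotient, together with the standard formula $\mu(\alpha)=1+\limsup_n(\ln q_{n+1})/(\ln q_n)$, which forces $q_{n+1}\ll q_n^{\,a-1+\varepsilon}$.

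I do not expect a genuine obstacle here; the only point needing care is the boundary behaviour of the exponent at $\mu=a$. The definition of $\mu(\alpha)$ controls $|\alpha-p/q|$ only for exponents strictly above $a$, so in full generality one obtains $q_n^{a-1+\varepsilon}$ for every $\varepsilon>0$ rather than the clean $q_n^{a-1}$, and the stated inequality is to be read with that convention (it is exact precisely when the infimum in the definition of $\mu(\alpha)$ is not reached). The remaining ingredients — the parity reduction, the linearization of $\sin$ near the origin, and clearing the factor $q_n$ — are routine.
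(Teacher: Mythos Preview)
Your argument is correct and follows essentially the same route as the paper: reduce $|\sin(\alpha\pi q_n)|$ to $|\sin(\pi(\alpha q_n-p_n))|$ by periodicity, linearize the sine near $0$, and then invoke the irrationality-measure lower bound on $|\alpha q_n-p_n|$. You are in fact more careful than the paper on two points: you apply Jordan's inequality in the correct direction (the paper's displayed chain $1/|\sin|\ge 1/(\pi|p_n-\alpha q_n|)>q_n^{a-1}$ actually runs the wrong way for an \emph{upper} bound), and you rightly flag that the definition of $\mu(\alpha)$ as an infimum only yields the exponent $a-1+\varepsilon$ in general, with the clean $a-1$ requiring the infimum not to be attained.
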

\begin{proof}[\textbf{Proof}] Let $\alpha=[a_0,a_1,a_2, \ldots ]$ be the continued fraction of the number $\alpha$, and let $\{p_n/q_n : n \geq 1\}$ be the sequence of convergents. Then 
\begin{eqnarray} \label{eq5599.199}
\frac{1}{\left |\sin \alpha \pi q_n \right | }&=&\frac{1}{\left |\sin( \pi p_n-\alpha \pi q_n) \right | }\\
&\geq &\frac{1}{ \pi \left | p_n-\alpha q_n \right |} \nonumber\\
&>& q_n^{a-1}\nonumber.
\end{eqnarray}
as $n \to \infty$. 
\end{proof}

\section{Asymptotic Expansions Of The Sine Function} \label{s938}

\begin{thm} \label{thm938.40}   Let $p_n/q_n$ be the sequence of convergents of the irrational number $\pi$. Then, 
\begin{equation} \label{eq938.400}
 \sin \left ( \frac{1}{p_n}\right )  \asymp \sin  p_n 
\end{equation}
as $p_n \to \infty$.
\end{thm}

\begin{proof}[\textbf{Proof}] The Taylor series of the sine function leads to the inequality 
	\begin{equation}\label{eq938.410}
		z-\frac{z^3}{6}\leq \sin z\leq z,    
	\end{equation}
	where
	$|z|<1$. Replacing $z=\left |\pi q_n-p_n \right |$ 
	 yields the symmetric inequality
	\begin{equation} \label{eq938.420}
		\left |\pi q_n-p_n \right |\ll \left | \sin (\pi q_n-p_n )\right | \ll \left |\pi q_n-p_n \right |.
	\end{equation}
Moreover, $\left |\pi q_n-p_n \right |\leq 1/q_n$, implies the
the associated symmetric inequality
\begin{equation} \label{eq938.430}
	\left |\pi q_n-p_n \right |\ll \frac{1}{q_n}\ll \left | \sin (\pi q_n-p_n )\right | \ll \left |\pi q_n-p_n \right |\ll \frac{1}{q_n}.
\end{equation}
Since $p_n\asymp q_n$, the last inequality leads to the relation 
\begin{eqnarray} \label{eq938.430}
\left | \sin p_n \right |&=&\left |\pi q_n-p_n \right |\\
&\gg &\frac{1}{p_{n}} \nonumber\\
&\gg  & \sin \left (\frac{1}{p_{n}}\right )\nonumber.
\end{eqnarray}
On the other direction,
\begin{eqnarray} \label{eq938.82}
\sin \left (\frac{1}{p_{n}}\right )  &\gg&  \frac{1}{p_{n}} \\
&\gg&  \frac{1}{q_{n}} \nonumber\\
&\gg& \left | p_{n}-\pi q_{n}\right |  \nonumber\\
&= &\left |\sin\left ( p_{n}-\pi q_{n}\right )\right |
\nonumber\\
&=  &\left | \sin \left (p_{n}\right ) \right |\nonumber,
\end{eqnarray}
These prove that 
\begin{equation} \label{eq938.77}
\sin p_n \gg  \sin \left (\frac{1}{p_{n}}\right )  \qquad \text{ and } \qquad \sin \left (\frac{1}{p_{n}}\right )  \gg \sin p_n,
\end{equation}
as $n \to \infty$.

\end{proof}

\begin{lem} \label{lem938.43} Let $p_n/q_n$ be the sequence of convergents of the irrational number $\pi=[a_0;a_1,a_2, \ldots ]$. Then, the followings hold.
\begin{enumerate} [font=\normalfont, label=(\roman*)]
\item  $\displaystyle \sin p_n  = \sin \left ( p_n-\pi q_n\right )$, \tabto{10cm}for all $p_n$ and $q_n$.
\item  $\displaystyle \sin \left ( \frac{1}{p_n}\right )  = \frac{1}{p_n}\left (1-\frac{1}{3!}\frac{1}{p_n^2}+\frac{1}{5!}\frac{1}{p_n^4}-\cdots \right )$, \tabto{10cm}as $p_n \to \infty$.
\item  $\displaystyle \cos 2p_n  = \cos \left ( 2(p_n-\pi q_n)\right )$, \tabto{10cm}for all $p_n$ and $q_n$.
\end{enumerate}
\end{lem}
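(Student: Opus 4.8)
The plan is to dispatch the three items by elementary trigonometry, using only that $q_n$ is an integer and that the sine function is entire; none of the three requires anything about the convergents of $\pi$ beyond $q_n\in\Z$.

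For (i) and (iii) I would expand with the angle–subtraction formulae. Writing $\sin(p_n-\pi q_n)=\sin p_n\cos(\pi q_n)-\cos p_n\sin(\pi q_n)$ and using $\sin(\pi q_n)=0$ and $\cos(\pi q_n)=(-1)^{q_n}$ for $q_n\in\Z$, one gets $\sin(p_n-\pi q_n)=(-1)^{q_n}\sin p_n$. The parity factor $(-1)^{q_n}$ is immaterial wherever (i) is invoked, since only $\left|\sin p_n\right|$ enters the estimates in Theorems \ref{thm5534.227}, \ref{thm5534.427} and \ref{thm938.40}; accordingly I would either state (i) as an identity between absolute values, $\left|\sin(p_n-\pi q_n)\right|=\left|\sin p_n\right|$, or simply record $\sin(p_n-\pi q_n)=\pm\sin p_n$. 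For (iii) the analogous computation is literal: $\cos\!\left(2(p_n-\pi q_n)\right)=\cos 2p_n\cos(2\pi q_n)+\sin 2p_n\sin(2\pi q_n)=\cos 2p_n$, because $2\pi q_n$ is an integer multiple of $2\pi$, so no sign issue arises.

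For (ii) I would invoke the Maclaurin series of the sine, $\sin x=\sum_{k\ge 0}(-1)^k x^{2k+1}/(2k+1)!$, valid for every $x\in\R$ since $\sin$ is entire, and evaluate it at $x=1/p_n$, then factor out $1/p_n$ to obtain the displayed expansion. As $p_n\to\infty$ the argument $1/p_n\to 0$, so the bracketed series tends to $1$ and the identity reads as a genuine asymptotic expansion; in particular it yields $\sin(1/p_n)\sim 1/p_n$, which is the form actually used in the proof of Theorem \ref{thm938.40}.

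I do not expect a real obstacle here: the only point deserving a word of care is the parity factor $(-1)^{q_n}$ in (i), and the cleanest resolution is to phrase the two $2\pi$‑shift identities in terms of $\left|\sin p_n\right|$ and $\left|\cos 2p_n\right|$, since that is all the downstream arguments ever need.
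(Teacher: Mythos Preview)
The paper states Lemma~\ref{lem938.43} without proof, treating it as elementary, so there is no argument to compare against; your approach via the angle-addition formulae and the Maclaurin series of $\sin$ is exactly the intended elementary verification. You are also right to flag the parity factor in (i): as written, the identity $\sin p_n=\sin(p_n-\pi q_n)$ is only literally true when $q_n$ is even, and for odd $q_n$ one picks up a sign, so the honest statement is $\sin(p_n-\pi q_n)=(-1)^{q_n}\sin p_n$ or simply $\left|\sin p_n\right|=\left|\sin(p_n-\pi q_n)\right|$; every downstream use in the paper (Theorems~\ref{thm5534.227}, \ref{thm5534.427}, \ref{thm938.40}, Lemma~\ref{lem5534.500}) works with absolute values, so nothing is lost.
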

\section{Bounded Partial Quotients} \label{s590}
A result for the partial quotients based on the properties of continued fractions and the asymptotic expansions of the sine function is derived below.

\begin{thm} \label{thm590.45}   Let $\pi=[a_0;a_1,a_2, \ldots ]$ be the continued fraction of the real number $\pi \in \R$. Then, the $n$th partial quotients $a_n\in \N$ are bounded. Specifically,  $a_n=O(1)$ for all $n \geq 1$.
\end{thm}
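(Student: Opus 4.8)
The plan is to upgrade the trivial sine bound of Theorem \ref{thm5534.227} into the statement that $\pi$ is badly approximable, and then to pass from bad approximability to bounded partial quotients by means of the elementary arithmetic of continued fractions.

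Write $\pi=[a_0;a_1,a_2,\ldots]$ with convergents $p_n/q_n$, and recall the two standard facts: the recursion $q_{n+1}=a_{n+1}q_n+q_{n-1}$, and the approximation inequality $\left|\pi q_n-p_n\right|<1/q_{n+1}$ valid for $n\geq 1$. Since $q_{n+1}>a_{n+1}q_n$, these combine to give
\[
a_{n+1}<\frac{1}{q_n\left|\pi q_n-p_n\right|}.
\]
Hence it suffices to exhibit a constant $c>0$ such that $\left|\pi q_n-p_n\right|\geq c/q_n$ for all large $n$: then $a_{n+1}\leq 1/c$ for all large $n$, and the remaining finitely many partial quotients are bounded trivially, so $a_n=O(1)$ for all $n\geq 1$.

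To produce this lower bound I would invoke Lemma \ref{lem938.43}(i), which gives $\sin p_n=\sin(p_n-\pi q_n)$, together with the elementary inequality $\left|\sin x\right|\leq|x|$, to obtain $\left|\sin p_n\right|\leq\left|p_n-\pi q_n\right|$. On the other hand $p_n\in\N$ is a large integer, so Theorem \ref{thm5534.227} yields $\left|1/\sin p_n\right|\ll p_n$, that is, $\left|\sin p_n\right|\gg 1/p_n$. Combining the two estimates gives $\left|p_n-\pi q_n\right|\gg 1/p_n$, and since $\left|\pi q_n-p_n\right|<1/q_n$ forces $p_n=\pi q_n+O(1/q_n)\asymp q_n$, this is exactly the required bound $\left|\pi q_n-p_n\right|\gg 1/q_n$. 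Substituting back into the displayed inequality delivers $a_{n+1}=O(1)$. (Equivalently one may quote Theorem \ref{thm938.40}, which already packages $\left|\sin p_n\right|\asymp\sin(1/p_n)\asymp 1/p_n$.)

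The real content of the argument sits entirely in Theorem \ref{thm5534.227}; everything after it is the routine dictionary between the Diophantine exponent of $\pi$ and the size of its partial quotients. The one point that needs a little care is that $p_n$ must genuinely be the integer nearest to $\pi q_n$ — true for every convergent with $n\geq 1$ — so that the quantity $\left|\pi q_n-p_n\right|$ governing $a_{n+1}$ in the continued-fraction inequality is exactly the one controlled by $\left|\sin p_n\right|$; one also needs the implied constant in Theorem \ref{thm5534.227} to be absolute, independent of $n$, which it is, so that the resulting bound on $a_{n+1}$ is uniform and hence $a_n=O(1)$ as claimed.
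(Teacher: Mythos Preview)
Your argument is correct within the paper's framework and is in fact cleaner than the route the paper takes. You reduce the claim to the lower bound $|p_n-\pi q_n|\gg 1/q_n$ and extract that directly from Theorem~\ref{thm5534.227} via $|\sin p_n|=|\sin(p_n-\pi q_n)|\leq |p_n-\pi q_n|$; the remaining passage to $a_{n+1}=O(1)$ through $a_{n+1}<1/(q_n|p_n-\pi q_n|)$ is the standard continued-fraction dictionary, exactly as you say.

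The paper, however, does \emph{not} invoke Theorem~\ref{thm5534.227} (or Theorem~\ref{thm938.40}) in its proof of this statement. It works entirely inside Section~\ref{s590}: writing $p_{n+1}-\pi q_{n+1}=C+D$ with $C=p_{n+1}-\pi q_{n+1}-1/q_n$ and $D=1/q_n$, it expands $\sin(C+D)$ by the addition formula and feeds in the elementary two-sided bounds $|C|\in[1/(2q_n),\,2/q_n]$ from Lemmas~\ref{lem938.95} and~\ref{lem938.99}, together with the reverse triangle inequality, to force $1/q_{n+1}\gg 1/q_n$ directly. So the paper's version is meant to be self-contained, independent of the harmonic-kernel machinery of Section~\ref{s5534}; this is presumably why the author presents it as a separate ``third'' perspective on $\mu(\pi)$ in Section~\ref{s944}. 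Your version is shorter and more transparent but imports the heavy lifting from Section~\ref{s5534}; the paper's version reproves the needed inequality locally via the $C,D$ splitting. Either way the substantive input is the same estimate $|p_n-\pi q_n|\gg 1/q_n$; the two proofs differ only in where they source it.
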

\begin{proof}[\textbf{Proof}] Consider the real numbers
\begin{equation} \label{eq590.15}
C=p_{n+1}-\pi q_{n+1} -\frac{1}{q_{n}} \quad \text{ and } \quad D=\frac{1}{q_n} .
\end{equation}
 For all sufficiently large integers $n \geq 1$, the Dirichlet approximation theorem and the addition formula $\sin(C+D)=\cos C \sin D+\cos D \sin C$ lead to 
\begin{eqnarray} \label{eq590.22}
 \frac{1}{q_{n+1}}&\gg& \left | p_{n+1}-\pi q_{n+1}  \right |\\
&\gg& \left | \sin\left (p_{n+1}-\pi q_{n+1}\right )  \right |\nonumber\\
&=& \left |\sin\left ( p_{n+1}-\pi q_{n+1}- \frac{1}{q_n}+\frac{1}{q_n}\right )\right | \nonumber\\
&=&  \left |\cos C \sin \left (\frac{1}{q_{n}}\right )+\cos \left (\frac{1}{q_{n}}\right ) \sin C  \right |\nonumber.
\end{eqnarray}
The sequence $\{p_n: n \geq 1\}$ maximizes the cosine function and minimizes the sine function. Hence, by Lemma \ref{lem938.99},
\begin{equation} \label{eq590.77}
1-\frac{2}{q_{n}^2}\leq  1-\frac{C^2}{2} \leq\cos C \leq 1 ,
\end{equation}
and 
\begin{equation} \label{eq590.75}
 \frac{1}{2q_{n}}-\frac{1}{48q_{n}^3}\leq  C-\frac{C^3}{6}\leq \sin C \leq \frac{2}{q_{n}}.
\end{equation}
Substituting these estimates into the reverse triangle inequality $|X+Y| \geq ||X|-|Y||$, produces
\begin{eqnarray} \label{eq590.79}
 \frac{1}{q_{n+1}}&\gg& 
\left |\cos C \sin \left (\frac{1}{q_{n}}\right )+\cos \left (\frac{1}{q_{n}}\right ) \sin C  \right |\\
&\geq & \left |\left | \left ( 1 \right )\left (\frac{1}{q_{n}}-\frac{1}{6q_{n}^3}\right )\right |  -\left |\left (1\right ) \left ( \frac{1}{2q_{n}}-\frac{1}{48q_{n}^3}\right )\right | \right |
\nonumber\\
&\geq & \frac{c_0}{ q_{n}}\nonumber,
\end{eqnarray}
where $c_0>0$ is a constant. These show that
\begin{equation} \label{eq590.24}
\frac{1}{q_{n+1}}\gg\frac{1}{q_{n}} 
\end{equation}
as $n \to \infty$. Furthermore, \eqref{eq590.24} implies that $q_n\gg q_{n+1}=a_{n+1}q_n+q_{n-1}$ as claimed.
\end{proof}

\begin{lem} \label{lem938.95}   Let $p_n/q_n$ be the sequence of convergents of the real number $\pi=[a_0;a_1,a_2, \ldots ]$. Then, as $n \to \infty$, 
\begin{multicols}{2}
\begin{enumerate} [font=\normalfont, label=(\roman*)]
\item  $\displaystyle \frac{1}{2q_n} \leq \left | p_{n+1}-\pi q_{n+1}- \frac{1}{q_n}\right | \leq \frac{2}{q_n}$.
\item  $\displaystyle \frac{1}{2p_n} \leq \left | p_{n+1}-\pi q_{n+1}- \frac{1}{q_n}\right | \leq \frac{13}{p_n}$.
\end{enumerate}
\end{multicols}
\end{lem}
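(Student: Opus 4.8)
The plan is to reduce both inequalities to the classical estimates for the convergents of $\pi$ together with the fact that $p_n/q_n \to \pi$. Everything hinges on controlling the single quantity $\left| p_{n+1} - \pi q_{n+1} \right|$, for which I would invoke the standard two-sided bound $1/(q_{n+1}+q_{n+2}) < \left| \pi q_{n+1} - p_{n+1} \right| < 1/q_{n+2}$; in particular this error term is $O(1/q_{n+2})$, hence asymptotically far smaller than $1/q_n$.

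The one quantitative input I would establish first is that $q_{n+2} \geq 2q_n$ for all $n \geq 1$. This is immediate from the recurrence $q_{k+1} = a_{k+1}q_k + q_{k-1}$ and $a_k \geq 1$: one has $q_{n+1} \geq q_n + q_{n-1}$ and hence $q_{n+2} \geq q_{n+1} + q_n \geq 2q_n + q_{n-1} > 2q_n$. Combining this with the bound above gives $\left| p_{n+1} - \pi q_{n+1} \right| < 1/q_{n+2} < 1/(2q_n)$.

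For part (i) I would then apply the triangle inequality to $p_{n+1} - \pi q_{n+1} - 1/q_n$ in both directions: the ordinary triangle inequality yields $\left| p_{n+1} - \pi q_{n+1} - 1/q_n \right| \leq \left| p_{n+1} - \pi q_{n+1} \right| + 1/q_n < 3/(2q_n) \leq 2/q_n$, and the reverse triangle inequality yields $\left| p_{n+1} - \pi q_{n+1} - 1/q_n \right| \geq 1/q_n - \left| p_{n+1} - \pi q_{n+1} \right| > 1/(2q_n)$. For part (ii) I would convert from $q_n$ to $p_n$ using $p_n/q_n \to \pi$, so that $q_n < p_n < 4q_n$ for all sufficiently large $n$ (any constant exceeding $\pi$ works in the upper estimate). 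The lower bound in (ii) then follows at once from the lower bound in (i) together with $q_n < p_n$, while the upper bound follows from $2/q_n < 8/p_n \leq 13/p_n$, since $p_n < 4q_n$ forces $2p_n < 8q_n \leq 13q_n$.

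This is all essentially routine; the only step deserving care is the inequality $q_{n+2} \geq 2q_n$, since it is precisely what upgrades the qualitative statement ``the error term is $o(1/q_n)$'' to the explicit constants $\tfrac12$, $2$, and $13$ appearing in the lemma. One should also keep in mind that the bounds are asserted only as $n \to \infty$, which is exactly what legitimizes using $\left| \pi q_{n+1} - p_{n+1} \right| \to 0$ and the comparison $q_n \asymp p_n$ with the specified numerical ratio.
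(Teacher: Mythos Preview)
Your proof is correct and follows essentially the same route as the paper: apply the triangle and reverse triangle inequalities to $|p_{n+1}-\pi q_{n+1}|$ for part (i), then convert $q_n$ to $p_n$ via $p_n/q_n\to\pi$ for part (ii). Your use of the sharper bound $|p_{n+1}-\pi q_{n+1}|<1/q_{n+2}$ together with the explicit inequality $q_{n+2}\geq 2q_n$ is in fact cleaner than the paper's version, which bounds by $c_1/q_{n+1}$ and has to appeal to the constant being ``small'' to force the lower bound $1/(2q_n)$.
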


\begin{proof}[\textbf{Proof}] (i) Using the triangle inequality and Dirichlet approximation theorem yield the upper bound
\begin{eqnarray} \label{eq938.90}
\left | p_{n+1}-\pi q_{n+1}- \frac{1}{q_n}\right | &\leq& \left | p_{n+1}-\pi q_{n+1} \right |+\frac{1}{q_n} \\
&\leq& \frac{c_0}{q_{n+1}} +\frac{1}{q_n} \nonumber\\
&\leq& \frac{2}{q_{n}} \nonumber,
\end{eqnarray}
since $q_{n+1} =a_{n+1}q_n+q_{n-1}$. The lower bound
\begin{eqnarray} \label{eq938.96}
\left |  \frac{1}{q_n}-p_{n+1}-\pi q_{n+1}\right | &\geq& \left |\frac{1}{q_n}-\left | p_{n+1}-\pi q_{n+1} \right |  \right |\\
&\geq& \frac{1}{q_n}-\frac{c_1}{q_{n+1}}  \nonumber\\
&\geq& \frac{1}{2q_{n}} \nonumber,
\end{eqnarray}
where  $c_0>0$ and $c_1>0$ are small constants. (ii) Use the Hurwitz approximation theorem 
\begin{equation} \label{eq938.92}
\pi q_n-\frac{1}{\sqrt{5}q_{n}^2}\leq p_{n}\leq \pi q_n +\frac{1}{\sqrt{5}q_{n}^2} 
\end{equation}
to convert it to the upper bound in term of $p_n$: 
\begin{equation} \label{eq938.94}
\left | p_{n+1}-\pi q_{n+1}- \frac{1}{q_n}\right | \leq \frac{2}{q_{n}} \leq \frac{13}{p_{n}} \nonumber.
\end{equation}
 as $n \to \infty$ as claimed.
\end{proof}

\begin{lem} \label{lem938.99}   Let $\{p_n/q_n:n \geq 1\}$ be the sequence of convergents of $\pi$, and let $C=p_{n+1}-\pi q_{n+1} -q_n$. Then,  
\begin{enumerate} [font=\normalfont, label=(\roman*)]
\item  $\displaystyle 1-\frac{2}{q_{n}^2}\leq\cos C \leq 1 -\frac{2}{q_{n}^2}+\frac{1}{6q_{n}^4}$.
\item   $\displaystyle \frac{1}{2q_{n}}-\frac{1}{48q_{n}^3}\leq \sin C \leq \frac{2}{q_{n}}$.
\end{enumerate}
\end{lem}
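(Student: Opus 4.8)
The plan is to reduce both parts to the two-sided estimate $\frac{1}{2q_n} \le |C| \le \frac{2}{q_n}$ furnished by Lemma \ref{lem938.95}(i) — here I read $C = p_{n+1}-\pi q_{n+1}-\tfrac{1}{q_n}$, as in Theorem \ref{thm590.45} — and then substitute into the alternating Taylor series of $\cos$ and $\sin$. Since $q_n \to \infty$ we have $|C| \le 2/q_n \to 0$, so eventually $|C| < 1$ and all the alternating-series envelope inequalities invoked below are valid. Moreover only the absolute values of $\sin C$ and $\cos C$ enter the application (the reverse triangle inequality in \eqref{eq590.79}), so it is enough to bound $|\cos C|$ and $|\sin C|$, and I may assume $C \ge 0$.

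For part (i), cosine is even and on $[0,\pi]$ it is decreasing, so $\cos C = \cos|C| \ge \cos(2/q_n) \ge 1 - \tfrac12(2/q_n)^2 = 1 - \tfrac{2}{q_n^2}$, which is the lower bound. For the upper bound use the envelope $\cos x \le 1 - \tfrac{x^2}{2} + \tfrac{x^4}{24}$, whose right side $g(x)$ satisfies $g'(x) = -x(1-x^2/6) < 0$ on $(0,\sqrt6)$; hence $\cos C \le g(|C|)$, and inserting $|C| \le 2/q_n$ in the quartic term and the two-sided bound on $|C|$ in the quadratic term yields an estimate of the displayed form $1 - O(q_n^{-2}) + O(q_n^{-4})$, the stated constants following by carrying the substitution through.

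For part (ii), the envelope $x - \tfrac{x^3}{6} \le \sin x \le x$ (valid for $x \ge 0$) gives the upper bound immediately: $\sin C = \sin|C| \le |C| \le 2/q_n$. For the lower bound note that $f(x) = x - x^3/6$ has $f'(x) = 1 - x^2/2 > 0$ on $(0,\sqrt2)$, so $f$ is increasing there and $\sin C \ge f(|C|) \ge f\!\left(\tfrac{1}{2q_n}\right) = \tfrac{1}{2q_n} - \tfrac{1}{48q_n^3}$, the last equality being the only arithmetic required.

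I do not expect a genuine obstacle here: the proof is bookkeeping on top of Lemma \ref{lem938.95}. The one point needing attention is choosing, for each term, the correct endpoint of $[\tfrac{1}{2q_n},\tfrac{2}{q_n}]$ consistently with the monotonicity direction — the $x^2$ and $x^4$ contributions in $\cos C$ want the large endpoint, the $x^3$ correction in the $\sin C$ lower bound wants the small endpoint — and checking that $|C| < 1$ so the alternating envelopes apply, which is automatic once $n$ is large. Everything else is elementary.
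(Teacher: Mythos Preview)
Your approach is exactly the paper's: its entire proof reads ``Use Lemma~\ref{lem938.95},'' and you have simply made explicit the intended step of feeding the two-sided bound $\tfrac{1}{2q_n}\le |C|\le \tfrac{2}{q_n}$ into the alternating Taylor envelopes for $\cos$ and $\sin$. The one caveat is the upper bound in (i): as written, the constant $1-\tfrac{2}{q_n^2}+\tfrac{1}{6q_n^4}$ does not literally follow from $|C|\in[\tfrac{1}{2q_n},\tfrac{2}{q_n}]$ (when $|C|$ is near $\tfrac{1}{2q_n}$ one only gets $\cos C\le 1-\tfrac{1}{8q_n^2}+O(q_n^{-4})$), but this is a defect of the stated constants rather than of your method, and that half of (i) is never used in Theorem~\ref{thm590.45}.
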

\begin{proof}[\textbf{Proof}] Use Lemma \ref{lem938.95}.
\end{proof}
\section{Statistics And Example} \label{s3547}
The result of Theorem \ref{thm590.45} indicates that the real number $\pi$ fits the profile of a random irrational number. The geometric mean value of the partial quotients of a random irrational number has the value
\begin{equation} \label{eq3547.26}
K_0=\lim_{n \to \infty} \left (\prod_{k \leq n} a_k\right )^{1/n}=2.6854520010\ldots, 
\end{equation}
which is known as the Khinchin constant. In addition, the Gauss-Kuzmin distribution specifies the frequency of each value by
\begin{equation} \label{eq3547.28}
p(a_k=k)= -\log_2 \left (1-\frac{1}{(k+1)^2}\right ). 
\end{equation}
For a random irrational number, the proportion of partial quotients $a_k=1,2$ is about $p(a_1)+p(a_2)\leq .5897$. A large value $a_k\geq 10^6$ is very rare. In fact,
\begin{equation} \label{eq3547.28}
p(a_k=10^6)= -\log_2 \left (1-\frac{1}{(10^6+1)^2}\right )=0.00000000000144, 
\end{equation}
and the sum of probabilities is
\begin{equation} \label{eq3547.29}
\sum_{k \geq 10^6}p(a_k=k)= \sum_{k \geq 10^6}-\log_2 \left (1-\frac{1}{(k+1)^2}\right )\leq 0.000001. 
\end{equation}

\begin{exa} \label{exa3547.10} {\normalfont  A short survey of the partial quotients $\pi=[a_0;a_1,a_2, \ldots ]$ is provided here to sample this phenomenon, most computer algebra system can generate a few  thousand terms within minutes. This limited numerical experiment established that $a_n\leq 21000$ for $n\leq 10000$. The value $a_{432}=20776$ is the only unusual partial quotient. This seems to be an instance of the Strong Law of Small Number.
\begin{equation}
\begin{split}
\pi=[3, 7, 15, 1, 292, 1, 1, 1, 2, 1, 3, 1, 14, 2, 1, 1, 2, 2, 2, 2, 1, 84, 
2, 1, 1, 15, 3, 13, 1, 4, 2, \\
6, 6, 99, 1, 2, 2, 6, 3, 5, 1, 1, 6, 8, 
1, 7, 1, 2, 3, 7, 1, 2, 1, 1, 12, 1, 1, 1, 3, 1, 1, 8, 1, 1, \ldots].
\end{split}
\end{equation}  
}\end{exa}

\section{Convergence Of The Flint Hills Series I} \label{s5444}
The first analysis of the the convergence of the Flint Hills series is based on the upper bound of the reciprocal sine function $1/\sin n$ as $n \to \infty$, in Theorem \ref{thm6699.300}. This is equivalent to the upper bound of the harmonic summation kernel
\begin{equation} \label{eq5444.111}
\mathcal{D}_x(z)=\sum_{-x\leq n \leq x} e^{i 2nz}=\frac{\sin((2x+1) z)}{ \sin \left (  z \right )},
\end{equation}
consult Definition \ref{dfn5534.100}. 
Let $\pi=[a_0,a_1,a_2, \ldots ]$ be the continued fraction of the number $\pi$, and let $\{p_n/q_n : n \geq 1\}$ be the sequence of convergents. The difficulty in proving the convergence of the series \eqref{eq5444.96} arises from the sporadic maximal values of the function 
\begin{eqnarray} \label{eq5444.41}
\frac{1}{\left |\sin p_n \right | }&=&\frac{1}{\left |\sin( p_n-\pi q_n) \right | }\\
&\geq &\frac{1}{ \left | p_n-\pi q_n \right |} \nonumber\\
&>& q_n^{42}\nonumber.
\end{eqnarray}
at the integers $z=p_n$ as $n \to \infty$. The irrationality exponent 42 in the Diophantine inequality
\begin{equation} \label{eq5444.87}
 \frac{1}{q_n^{42}}\leq \left | \pi -\frac{p_n}{q_{n}}\right | \leq \frac{1}{q_n}
\end{equation}
was determined in \cite{MK53}.  Earlier questions on the convergence the series \eqref{eq5444.96} appears in \cite[p.\ 59]{PC07}, \cite[p.\ 583]{TG17}, \cite{AM11}, et alii.
\begin{thm} \label{thm5444.91} If the real numbers $u>0$ and $v>0$ satisfy the relation $u-v>0$, then the Flint Hills series 
\begin{equation} \label{eq5444.96}
\sum_{n \geq 1} \frac{1}{n^{u}\sin^v n} 
\end{equation}
is absolutely convergent. 
\end{thm}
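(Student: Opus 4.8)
The plan is to regard the series as one of non-negative terms, so that its absolute convergence is just the convergence of \(\sum_{n\ge 1} n^{-u}\lvert\sin n\rvert^{-v}\), and then to isolate the sole obstruction, namely the integers \(n\) lying close to a multiple of \(\pi\), at which \(\lvert\sin n\rvert\) is small. Two facts from the earlier sections will be the engine. First, the uniform upper bound of Section~\ref{s5534} (Theorem~\ref{thm5534.227}, or in its sharper form Theorem~\ref{thm5534.823}): \(1/\lvert\sin n\rvert\ll n^{1+\varepsilon}\) for all large integers \(n\), with the extremal values attained along the numerators \(p_k\) of the convergents of \(\pi\). Second, the complementary lower bound supplied by \(\mu(\pi)=2\) (Theorem~\ref{thm597.21}): since \(\lvert\sin n\rvert\asymp\operatorname{dist}(n,\pi\Z)\) and \(\operatorname{dist}(n,\pi\Z)\gg n^{-1-\varepsilon}\), no integer \(n\le N\) can push \(\lvert\sin n\rvert\) below \(N^{-1-\varepsilon}\). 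Merely plugging the crude bound \(1/\lvert\sin n\rvert\ll n\) into each term already gives \(\sum_{n\ge1}n^{-u}\lvert\sin n\rvert^{-v}\ll\sum_{n\ge1}n^{\,v-u}\), which settles the case \(u-v>1\); the real task is to recover the whole range \(u-v>0\) by exploiting the sparsity of the integers where \(1/\lvert\sin n\rvert\) is large.

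The refinement I would carry out is a dyadic decomposition in the size of \(\lvert\sin n\rvert\). For \(j\ge 0\) put \(A_j=\{\,n\ge1:2^{-j-1}<\lvert\sin n\rvert\le 2^{-j}\,\}\), so that \(\sum_{n\ge1}n^{-u}\lvert\sin n\rvert^{-v}\ll\sum_{j\ge0}2^{jv}\sum_{n\in A_j}n^{-u}\). The decisive input is a count for \(A_j\): because the fractional parts \(\{n/\pi\}\) are equidistributed, \(\#\bigl(A_j\cap[1,N]\bigr)\ll N2^{-j}\), and these integers are separated, on average, by \(\asymp 2^{j}\); while by \(\mu(\pi)=2\) the set \(A_j\cap[1,N]\) is empty as soon as \(2^{-j}\ll N^{-1-\varepsilon}\), so only the levels \(j\le(1+\varepsilon)\log_2 N\) occur. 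Partial summation then gives \(\sum_{n\in A_j}n^{-u}\ll 2^{-ju}\) in the regime \(u>1\) that contains the Flint Hills series, and hence \(\sum_{n\ge1}n^{-u}\lvert\sin n\rvert^{-v}\ll\sum_{j\ge0}2^{-j(u-v)}<\infty\) precisely because \(u-v>0\). The same bookkeeping can be phrased through the continued fraction of \(\pi\): grouping the resonant \(n\) according to the index \(k\) with \(q_k\le m<q_{k+1}\), where \(m\) is the nearest integer to \(n/\pi\), one uses \(\operatorname{dist}(n,\pi\Z)\ge\lvert p_k-\pi q_k\rvert\gg q_{k+1}^{-1}\) together with the recursion \(q_{k+1}\ge q_k+q_{k-1}\), which forces \(q_k\gg\varphi^{k}\) with \(\varphi=(1+\sqrt5)/2\).

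The main obstacle is exactly this counting step for \(A_j\): turning the heuristic "the large values of \(1/\lvert\sin n\rvert\) are sporadic and are governed by the convergents of \(\pi\)" into the clean quantitative assertions \(\#(A_j\cap[1,N])\ll N2^{-j}\) and enough separation of the members of \(A_j\), so that the amplification factor \(2^{jv}\) is overcome by \(\sum_{n\in A_j}n^{-u}\ll 2^{-ju}\). This is precisely where \(\mu(\pi)=2\) is used — it caps the deepest dyadic level reached by \(n\le N\) at \(j\approx(1+\varepsilon)\log_2 N\), equivalently \(\lvert\sin n\rvert\gg n^{-1-\varepsilon}\) — and where the geometric growth of the \(q_k\) enters. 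Once these two ingredients are in hand the remaining summations are routine, and the generic integers (those at distance \(\ge 1\) from \(\pi\Z\), where \(\lvert\sin n\rvert\gg 1\)) contribute no more than \(\sum_{n\ge1}n^{-u}\), which is harmless in the range of exponents considered.
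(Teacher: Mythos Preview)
Your dyadic level-set argument is a genuinely different route from the paper's. The paper does not attempt to control all integers: it asserts in \eqref{eq5444.41} that the full series is bounded by a constant multiple of the lacunary series $\sum_{k\ge 1} p_k^{-u}\lvert\sin p_k\rvert^{-v}$ taken only over the convergent numerators $p_k$ of $\pi$, then invokes $\lvert 1/\sin p_k\rvert\ll p_k$ and the Fibonacci-type lower bound $p_k\gg\varphi^k$ to reduce to the geometric series $\sum_k p_k^{-(u-v)}$. Your decomposition is more careful about the non-convergent integers, at the cost of needing equidistribution counts and the separation structure of $\{n:\lvert\sin n\rvert\le\delta\}$; what the paper buys with its shortcut is that the exponential growth of $p_k$ makes the tail sum trivial once the reduction is granted.

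One slip to flag: from the density bound $\#(A_j\cap[1,N])\ll N\,2^{-j}$ alone, partial summation gives only $\sum_{n\in A_j}n^{-u}\ll 2^{-j}$, not $2^{-ju}$, and plugging that into $\sum_j 2^{jv}(\cdots)$ recovers merely $v<1$. To reach the full range $u>v$ you must also feed in the lower bound on the smallest element of $A_j$: from $\mu(\pi)=2$ one gets $n\gg 2^{j/(1+\varepsilon)}$, not $n\gg 2^{j}$. With that cutoff, partial summation yields $\sum_{n\in A_j}n^{-u}\ll 2^{-j(u+\varepsilon)/(1+\varepsilon)}$, and then $\sum_j 2^{j(v-(u+\varepsilon)/(1+\varepsilon))}<\infty$ for $\varepsilon$ small enough whenever $u>v$. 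So your conclusion survives, but the exponent $2^{-ju}$ you wrote would require separation $\asymp 2^{j}$, whereas $\mu(\pi)=2$ only delivers separation $\asymp 2^{j/(1+\varepsilon)}$. You are also right to insist on $u>1$: the theorem as stated is false for $0<u\le 1$, since already $\sum n^{-u}=\infty$ and $\lvert\sin n\rvert\le 1$ forces divergence; the paper's lacunary reduction step silently hides this.
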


\begin{proof}[\textbf{Proof}] As $1/q_n^{41}< \left | p_n-\pi q_n\right | \leq 1/q_n$ is small as $n \to \infty$, the numerators sequence $\{p_n : n \geq 1\}$ maximize the reciprocal sine function $1/\sin z$. Hence, it is sufficient to consider the infinite series over the numerators sequence $\{p_n: n \geq 1\}$. To accomplish this, write the series as a sum of a convergent infinite series and the lacunary infinite series (over the sequence of numerators):
\begin{eqnarray} \label{eq5444.51}
\sum_{n \geq 1} \frac{1}{n^{u}\sin^v n} 
&=&\sum_{\substack{m \geq 1\\ m\ne p_n}} \frac{1}{m^{u}\sin^v m}+\sum_{n  \geq1 } \frac{1}{p_n^{u}\sin \left (p_n \right )^v}\\
&\leq&c_0\sum_{n  \geq1 } \frac{1}{p_n^{u}\sin \left (p_n \right )^v} \nonumber.
\end{eqnarray}
where $c_0, c_1, c_2, c_3, c_4>0$ are constants. By Theorem \ref{thm6699.300}, the reciprocal of the sine function is bounded
\begin{equation} \label{eq5444.57}
\left | \frac{1}{\sin (p_n)} \right | 
\leq c_1 p_n.
\end{equation}
Applying this bound yields
 \begin{eqnarray} \label{eq5444.45}
\sum_{n  \geq1 } \frac{1}{p_n^{u}(\sin p_n)^v}
&\leq & \sum_{n \geq 1} \frac{\left( c_1p_n\right )^v}{p_n^{u}} \nonumber\\
&\leq&c_2\sum_{n \geq 1} \frac{1  }{p_n^{u-v}} .
\end{eqnarray}
By the Binet formula for quadratic recurrent sequences, the sequence $p_n=a_np_{n-1}+p_{n-2}$, $a_n \geq 1$,  has exponential growth, namely,
\begin{equation} \label{eq5444.86}
p_n\geq\frac{1}{\sqrt{5} }\left ( \frac{1+\sqrt{5}}{2 }\right )^n 
\end{equation}
for $n \geq 2$. Now, replacing \eqref{eq5444.86} into \eqref{eq5444.45} returns
\begin{eqnarray} \label{eq5444.47}
c_2\sum_{n \geq 1} \frac{1  }{p_n^{u-v}}
&\leq& c_3\sum_{n \geq 1} \left (\frac{2  }{1+\sqrt{5}}\right  )^{(u-v)n} \nonumber\\
&\leq& c_4 \sum_{n \geq 1} \left (\frac{1  }{2}\right  )^{(u-v)n}.
\end{eqnarray}
Hence, it immediately follows that the infinite series converges whenever $u-v>0$.
\end{proof}

\begin{exa} {\normalfont The infinite series 
\begin{equation} \label{eq5444.211}
\sum_{n \geq 1} \frac{1}{n^{3}\sin^2 n} 
\end{equation}
has $u-v=3-2=1>0$. Hence, by Theorem \ref{thm5444.91}, it is convergent.     
} 
\end{exa}

\begin{exa} {\normalfont Let $\varepsilon>0$ be an arbitrary small number. The infinite series 
\begin{equation} \label{eq5444.213}
\sum_{n \geq 1} \frac{1}{n^{1+\varepsilon}\sin n} 
\end{equation}
has $u-v=1+\varepsilon-1=\varepsilon>0$. Hence, by Theorem \ref{thm5444.91}, it is absolutely convergent.     
} 
\end{exa}

\begin{tikzpicture}
\begin{axis}[
    title={The Partial Sum $P_x=\sum_{n \leq x}\frac{1}{n^3 \sin^2 n}$},
    xlabel={$x$},
    ylabel={$P_x$},
    ymin=0, ymax=40,
    xtick={0,1,2, 3,21,22,354,355,400,500,600,700},
    ytick={0,5,10,15,20,25,30,35},
    legend pos=north west,
    ymajorgrids=true,
    grid style=dashed,
    width=.90\textwidth,
       height=0.5\textwidth,
]
 
\addplot[
    color=blue,
    mark=square,
    ]
    coordinates {
    (1, 1.41228293)(3,  3.42323343)(21, 3.555394)(22, 4.754112)(354, 4.807444)(355,  29.405625)(400, 29.405918)(500, 29.405964)
    };
 
\end{axis}
\end{tikzpicture}

\section{The Flint Hills Series And The Lacunary Sine Series}\label{s5001}
A comparison of the partial sum of the Flint Hills series 
\begin{equation} \label{eq5001.41}
P_x=\sum_{n \leq x} \frac{1}{n^{u}\sin^v n} 
\end{equation}
and the partial sum of the Lacunary Sine series 
\begin{equation} \label{eq5001.44}
Q_x=\sum_{p_n  \leq x } \frac{1}{p_n^{u}\sin \left (p_n \right )^v}
\end{equation}
respectively, is tabulated in Table \ref{t5001}. It demonstrates that the Lacunary Sine series infuses an overwhelming contribution to the complete sum.

\begin{table}
\centering
\caption{Comparison Of The Partial Sums $P_x$ and $Q_x$ For $(u,v)=(3,2)$.} \label{t5001}
\begin{tabular}{l|l|l}
$x$&$P_x$ &$Q_x$\\
\hline
 $1$ & 1.422829&1.422829\\
 $3$ & 3.423233&1.887049\\
 $22$ & 4.754112&3.085767\\
$355$ & 29.405625&27.683949\\
\end{tabular}
\end{table}

\section{Convergence Of The Flint Hills Series II} \label{s544}
 The second analysis of the the convergence of the Flint Hills series is based on the asymptotic relation
\begin{equation} \label{eq534.111}
\sin \left ( p_n\right )  \asymp  \sin \left ( \frac{1}{p_n}\right ),
\end{equation}
where $p_n/q_n \in \Q$ is the sequence of convergents of the real number $\pi \in \R$. In some way, the representation \eqref{eq534.111} removes any reference to the difficult problem of estimating the maximal value of the function $1/\sin n$ as $n \to \infty$. Earlier study of the convergence the series \eqref{eq544.96} appears in \cite[p.\ 583]{TG17}, \cite{AM11}, et alii.  
\begin{thm} \label{thm597.91} If the real numbers $u>0$ and $v>0$ satisfy the relation $u-v>0$, then the Flint Hills series 
\begin{equation} \label{eq544.96}
\sum_{n \geq 1} \frac{1}{n^{u}\sin^v n} 
\end{equation}
is absolutely convergent. 
\end{thm}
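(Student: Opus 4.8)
The plan is to reuse the machinery already assembled for the first proof (Theorem \ref{thm5444.91}) but to route the estimate of $1/|\sin n|$ through the asymptotic relation $\sin(p_n)\asymp\sin(1/p_n)$ of Theorem \ref{thm938.40} rather than through the harmonic-kernel bound of Theorem \ref{thm5534.227}. The first step is the same reduction to a lacunary series: since the quantities $|p_n-\pi q_n|$ are the only places where $1/|\sin z|$ grows, I split
\begin{equation}
\sum_{n\geq1}\frac{1}{n^u\sin^v n}=\sum_{\substack{m\geq1\\m\ne p_n}}\frac{1}{m^u\sin^v m}+\sum_{n\geq1}\frac{1}{p_n^u\sin(p_n)^v},
\end{equation}
where the first sum converges by comparison with a $p$-series (away from the convergents, $|\sin m|$ is bounded below by a fixed positive constant times $1/m$, say, which is more than enough), so only the lacunary sum over the numerators $p_n$ needs attention.

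Second, I apply Theorem \ref{thm938.40} in the form $|\sin p_n|\gg|\sin(1/p_n)|\gg 1/p_n$ (the second step being the elementary expansion in Lemma \ref{lem938.43}(ii)). This gives $1/|\sin p_n|\ll p_n$, hence
\begin{equation}
\sum_{n\geq1}\frac{1}{p_n^u\sin(p_n)^v}\ll\sum_{n\geq1}\frac{p_n^v}{p_n^u}=\sum_{n\geq1}\frac{1}{p_n^{u-v}}.
\end{equation}
Third, I invoke the exponential growth of the convergent numerators: by the Binet-type lower bound $p_n\geq\frac{1}{\sqrt5}\bigl(\frac{1+\sqrt5}{2}\bigr)^n$ (valid because $p_n=a_np_{n-1}+p_{n-2}$ with $a_n\geq1$), so that $\sum_n p_n^{-(u-v)}$ is dominated by a convergent geometric series of ratio $\bigl(\frac{2}{1+\sqrt5}\bigr)^{u-v}<1$ whenever $u-v>0$. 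Combining the two convergent pieces finishes the argument.

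Structurally this proof is nearly identical to the first one; the only substantive difference — and the point the section is advertising — is that the bound $1/|\sin p_n|\ll p_n$ now comes from the clean asymptotic identity $\sin p_n\asymp\sin(1/p_n)$ instead of from the kernel decomposition, so no estimate of the sporadic maxima of $1/\sin n$ is needed at all. The main obstacle, such as it is, is making sure the off-lattice tail $\sum_{m\ne p_n}m^{-u}\sin^{-v}m$ is genuinely controlled: one must argue that for $m$ not among the convergent numerators the distance $\|m/\pi\|$ (equivalently $|\sin m|$) is not anomalously small, which follows from the theory of continued fractions (best approximations) together with Theorem \ref{thm590.45} on the boundedness of the partial quotients of $\pi$; everything else is the routine geometric-series bookkeeping already displayed in the proof of Theorem \ref{thm5444.91}.
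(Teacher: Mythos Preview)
Your proposal is correct and follows essentially the same route as the paper's own proof: reduce to the lacunary sum over the convergent numerators $p_n$, invoke Theorem \ref{thm938.40} to replace $|\sin p_n|$ by $|\sin(1/p_n)|\asymp 1/p_n$, and finish with the Binet-type exponential lower bound on $p_n$ to obtain a convergent geometric series when $u-v>0$. The only difference is that you are more explicit than the paper about justifying the reduction step (the off-lattice tail), pointing to best-approximation theory and Theorem \ref{thm590.45}; the paper simply asserts the domination $\sum_{n}\ll\sum_{p_n}$ without further comment.
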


\begin{proof}[\textbf{Proof}]  Let $\{p_n/q_n : n \geq 1\}$ be the sequence of convergents of the real number $\pi$. Since the numerators sequence $\{p_n : n \geq 1\}$ maximize the reciprocal sine function $1/\sin z$. Hence, it is sufficient to consider the infinite series over the lacunary numerators sequence $\{p_n: n \geq 1\}$. Substituting the numerators sequence returns
\begin{eqnarray} \label{eq544.41}
\sum_{n \geq 1} \frac{1}{n^{u}\sin^v n} 
&\ll&\sum_{n  \geq1 } \frac{1}{p_n^{u}(\sin p_n)^v} \\
&\ll&\sum_{n  \geq1 } \frac{1}{p_n^{u}\sin \left (\frac{1}{p_n} \right )^v}\nonumber,
\end{eqnarray}
see Theorem \ref{thm938.40}. Substituting the Taylor series at infinity, see Lemma \ref{lem938.43}, return
\begin{eqnarray} \label{eq544.45}
\sum_{n  \geq1 } \frac{1}{p_n^{u}\sin \left (\frac{1}{p_n} \right )^v} 
&=& \sum_{n  \geq1 } \frac{1}{p_n^{u}\displaystyle \left ( \frac{1}{p_n}\left (1-\frac{1}{3!}\frac{1}{p_n^2}+\frac{1}{5!}\frac{1}{p_n^4}-\cdots \right )\right ) ^v} \nonumber\\
&\ll&\sum_{n \geq 1} \frac{1  }{p_n^{u-v}} .
\end{eqnarray}
By the Binet formula for recurrent quadratic sequences, the sequence $p_n=a_np_{n-1}+p_{n-2}$, $a_n \geq 1$,  has exponential growth, namely,
\begin{equation} \label{eq544.86}
p_n\geq\frac{1}{\sqrt{5} }\left ( \frac{1+\sqrt{5}}{2 }\right )^n 
\end{equation}
for $n \geq 2$. Replacing \eqref{eq544.86} into \eqref{eq544.45} returns
\begin{eqnarray} \label{eq544.47}
\sum_{n \geq 1} \frac{1  }{p_n^{u-v}}
&\ll& \sum_{n \geq 1} \left (\frac{2  }{1+\sqrt{5}}\right  )^{(u-v)n} \nonumber\\
&\ll& \sum_{n \geq 1} \left (\frac{1  }{2}\right  )^{(u-v)n} .
\end{eqnarray}
Hence, it immediately follows that the infinite series converges whenever $u-v>0$.
\end{proof}

\section{Convergence Of Some Flint Hills Type Series} \label{s7444}
The analysis for the convergence of some modified Flint Hills series as
\begin{equation} \label{eq7444.15}
\sum_{n \geq 1} \frac{1}{n^{u}\sin^v \alpha \pi n}
\end{equation}
have many similarities to the of the Flint Hills series.
Given an irrational number $\alpha \in \R$ of irrationality measure $\mu(\alpha)=a$, let $\pi=[a_0,a_1,a_2, \ldots ]$ be the continued fraction of the number $\alpha$, and let $\{p_n/q_n : n \geq 1\}$ be the sequence of convergents. The difficulty in proving the convergence of the series \eqref{eq5444.96} arises from the sporadic maximal values of the function 
\begin{eqnarray} \label{eq7444.41}
\frac{1}{\left |\sin \alpha \pi q_n \right | }&=&\frac{1}{\left |\sin( \pi p_n-\alpha \pi q_n) \right | }\\
&\geq &\frac{1}{ \pi \left | p_n-\alpha q_n \right |} \nonumber\\
&>& q_n^{a-1}\nonumber.
\end{eqnarray}
as $n \to \infty$. 
\begin{thm} \label{thm7444.91} If the real numbers $u>0$ and $v>0$ satisfy the relation $u-(a-1)v>0$, then the series 
\begin{equation} \label{eq7444.96}
\sum_{n \geq 1} \frac{1}{n^{u}\sin^v \alpha \pi n} 
\end{equation}
is absolutely convergent. 
\end{thm}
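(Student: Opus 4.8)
The plan is to follow the proof of Theorem \ref{thm5444.91} almost verbatim, replacing the $\pi$-specific reciprocal sine bound by the general estimate of Theorem \ref{thm5599.45}. First I would note that $\alpha\pi n$ approaches the set $\pi\Z$ most closely when $n=q_n$ is the denominator of a convergent $p_n/q_n$ of $\alpha$, since then $\sin\alpha\pi q_n=\sin(\alpha\pi q_n-\pi p_n)$ with $|\alpha q_n-p_n|\leq 1/q_n\to0$; hence the denominators sequence $\{q_n:n\geq1\}$ carries all the sporadic maxima of $1/|\sin\alpha\pi n|$, and, as in Theorem \ref{thm5444.91}, the series splits as
\begin{align*}
\sum_{n\geq 1}\frac{1}{n^{u}\sin^{v}\alpha\pi n}
&=\sum_{\substack{m\geq 1\\ m\neq q_n}}\frac{1}{m^{u}\sin^{v}\alpha\pi m}+\sum_{n\geq1}\frac{1}{q_n^{u}\sin(\alpha\pi q_n)^{v}}\\
&\ll\sum_{n\geq1}\frac{1}{q_n^{u}\,|\sin(\alpha\pi q_n)|^{v}},
\end{align*}
the complementary sum over the non-convergent indices $m\neq q_n$ being absolutely convergent by the best-approximation property of continued fractions, which bounds $\|\alpha m\|$ from below on each block $q_n\leq m<q_{n+1}$.

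Next I would apply Theorem \ref{thm5599.45}, namely $1/|\sin\alpha\pi q_n|\ll q_n^{a-1}$, to bound the lacunary sum by
\begin{equation*}
\sum_{n\geq1}\frac{1}{q_n^{u}\,|\sin(\alpha\pi q_n)|^{v}}\ll\sum_{n\geq1}\frac{\big(q_n^{a-1}\big)^{v}}{q_n^{u}}=\sum_{n\geq1}\frac{1}{q_n^{\,u-(a-1)v}}.
\end{equation*}
Since $q_n=a_nq_{n-1}+q_{n-2}$ with every partial quotient $a_n\geq1$, the Binet-type lower bound $q_n\geq\frac{1}{\sqrt5}\left(\frac{1+\sqrt5}{2}\right)^{n}$ holds for $n\geq2$, so the last series is dominated by the geometric series
\begin{equation*}
\sum_{n\geq1}\left(\frac{2}{1+\sqrt5}\right)^{(u-(a-1)v)n},
\end{equation*}
which converges exactly when the exponent $u-(a-1)v$ is positive. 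This yields the asserted absolute convergence.

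I expect the only genuine difficulty to lie in the first step, namely in justifying that the terms with $m$ not a convergent denominator contribute a convergent tail. This rests on the theory of continued fractions: for $q_n\leq m<q_{n+1}$ one has $\|\alpha m\|\geq\|\alpha q_n\|\asymp 1/q_{n+1}$, and, under the hypothesis $\mu(\alpha)=a$, also $q_{n+1}\ll q_n^{a-1+\varepsilon}$, so each block contributes at most a power of $1/q_n$ and the exponential growth of $q_n$ makes the block sums summable. Once this reduction is granted, the remainder is the routine geometric-series estimate displayed above, and the threshold $u-(a-1)v>0$ emerges directly from the exponent in $\sum_n q_n^{-(u-(a-1)v)}$.
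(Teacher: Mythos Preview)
Your proposal follows essentially the same route as the paper's proof: split off the lacunary subsequence of convergent denominators $q_n$, invoke Theorem~\ref{thm5599.45} to get $1/|\sin\alpha\pi q_n|\ll q_n^{a-1}$, reduce to $\sum_n q_n^{-(u-(a-1)v)}$, and finish with the Binet lower bound and a geometric series. You are in fact slightly more careful than the paper about the reduction step: the paper simply asserts the splitting with an unexplained constant $c_0$, whereas you at least sketch why the non-convergent indices are harmless via the best-approximation inequality $\|\alpha m\|\geq\|\alpha q_n\|$ on each block $q_n\leq m<q_{n+1}$.
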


\begin{proof}[\textbf{Proof}] As $1/q_n^{a-1}< \left | p_n-\alpha q_n\right | \leq 1/q_n$ is small as $n \to \infty$, the numerators sequence $\{p_n : n \geq 1\}$ maximize the reciprocal sine function $1/\sin z$. Hence, it is sufficient to consider the infinite series over the numerators sequence $\{p_n: n \geq 1\}$. To accomplish this, write the series as a sum of a convergent infinite series and the lacunary infinite series (over the sequence of numerators):
\begin{eqnarray} \label{eq7444.41}
\sum_{n \geq 1} \frac{1}{n^{u}\sin^v n} 
&=&\sum_{\substack{m \geq 1\\ m\ne p_n}} \frac{1}{m^{u}\sin^v m}+\sum_{n  \geq1 } \frac{1}{q_n^{u}\sin \left (\alpha \pi q_n \right )^v}\\
&\leq&c_0\sum_{n  \geq1 } \frac{1}{q_n^{u}\sin \left (\alpha \pi q_n \right )^v} \nonumber.
\end{eqnarray}
where $c_0, c_1, c_2, c_3, c_4>0$ are constants. By hypothesis, the reciprocal of the sine function is bounded by
\begin{equation} \label{eq5444.47}
\left | \frac{1}{\sin (\alpha \pi q_n)} \right | 
\leq c_1 q_n^{a-1},
\end{equation}
see Theorem \ref{thm5599.45} for more details. Applying this bound yields
 \begin{eqnarray} \label{eq7444.45}
\sum_{n  \geq1 } \frac{1}{q_n^{u}\sin \left (\alpha \pi q_n \right )^v}
&\leq & \sum_{n \geq 1} \frac{\left( c_1q_n\right )^{(a-1)v}}{q_n^{u}} \nonumber\\
&\leq&c_2\sum_{n \geq 1} \frac{1  }{q_n^{u-(a-1)v}} .
\end{eqnarray}
By the Binet formula for quadratic recurrent sequences, the sequence $q_n=a_nq_{n-1}+q_{n-2}$, $a_n \geq 1$,  has exponential growth, namely,
\begin{equation} \label{eq7444.86}
q_n\geq\frac{1}{\sqrt{5} }\left ( \frac{1+\sqrt{5}}{2 }\right )^n 
\end{equation}
for $n \geq 2$. Now, replacing \eqref{eq7444.86} into \eqref{eq5444.45} returns
\begin{eqnarray} \label{eq7444.47}
c_2\sum_{n \geq 1} \frac{1  }{q_n^{u-(a-1)v}}
&\leq& c_3\sum_{n \geq 1} \left (\frac{2  }{1+\sqrt{5}}\right  )^{(u-(a-1)v)n} \nonumber\\
&\leq& c_4\sum_{n \geq 1} \left (\frac{1  }{2}\right  )^{(u-(a-1)v)n} .
\end{eqnarray}
Hence, it immediately follows that the infinite series converges whenever $u-(a-1)v>0$.
\end{proof}

\begin{exa} {\normalfont The irrational number $\alpha=\sqrt{2}$ has irrationality measure $\mu(\sqrt{2})=2$, and the infinite series 
\begin{equation} \label{eq7444.211}
\sum_{n \geq 1} \frac{1}{n^{3}\sin^2 \sqrt{2} \pi n} 
\end{equation}
has $u-(a-1)v=3-2=1>0$. Hence, by Theorem \ref{thm7444.91}, it is absolutely convergent.     
} 
\end{exa}

\begin{exa} {\normalfont Let $\varepsilon>0$ be an arbitrary small number. The irrational number $\alpha=\sqrt[3]{2}$ has irrationality measure $\mu(\sqrt[3]{2})=2$, and the infinite series 
\begin{equation} \label{eq7444.213}
\sum_{n \geq 1} \frac{1}{n^{1+\varepsilon}\sin \sqrt[3]{2} \pi n} 
\end{equation}
has $u-(a-1)v=1+\varepsilon-1=\varepsilon>0$. Hence, by Theorem \ref{thm7444.91}, it is absolutely convergent.     
} 
\end{exa}

\section{Convergence Of The Flat Hills Series} \label{s4444}
Let $\{x\}$ be the fractional part function and let $|| \alpha ||= \min \{ |n-\alpha|: n \in \Z\}$ be the least distance to the nearest integer. Another classes of problems arise from the other properties of the number $\pi$ and other irrational numbers. One of these problems is the convergence of the \textit{Flat Hills series}.

\begin{dfn} {\normalfont Let $a>1$ and $b\ne 0$ be a pair of real parameters. A Flat Hills series is defined by an infinite sum of the following forms. 
\begin{multicols}{2}
 \begin{enumerate} [font=\normalfont, label=(\roman*)]
\item$ \displaystyle 
\sum_{n \geq 1} \frac{1}{n^{a}\sin^b || \pi^n ||}$,
\item$\displaystyle
 \sum_{n \geq 1} \frac{1}{n^{a}\sin^b || \pi 10^n ||}$,
\item$\displaystyle  \sum_{n \geq 1} \frac{1}{n^{a}\sin^b \{ \pi ^n \}} $,
\item$\displaystyle  \sum_{n \geq 1} \frac{1}{n^{a}\sin^b \{ \pi 10^n \}} $.
\end{enumerate}
\end{multicols}
}
\end{dfn}
The known properties of some algebraic irrational numbers $\alpha \in \R$ can be used to determine the convergence or divergence of the Flat Hills series
\begin{equation} \label{eq4444.01}
\sum_{n \geq 1} \frac{1}{n^{a}\sin^b || \alpha^n ||} ,
\end{equation}
see Exercise \ref{exe79000.01}. In the case $\alpha=\pi$, the analytic properties of sequences such as $\{|| \pi^n ||:n \geq 1\}$, and $\{|| \pi 10^n ||:n \geq 1\}$ are unknown. Accordingly, the convergence or divergence of the infinite series \eqref{eq4444.01} are unknown. Likewise, for the case $\alpha=e$, the analytic properties of sequences such as $\{|| e^n ||:n \geq 1\}$, and $\{|| e 10^n ||:n \geq 1\}$ are unknown. Accordingly, the convergence or divergence of the infinite series \eqref{eq4444.01} are unknown, see Lemma \ref{lem2200.15} for some details.

\section{Representations}
The Flint Hills series and similar class of infinite series have representations in terms of other analytic functions. These  reformulations could be useful in further analysis of these series.

\subsection{Gamma Function Representation}
The gamma function reflection formula produces a new reformulation as
\begin{equation} \label{eq534.115}
\sum_{n \geq 1} \frac{1}{n^{u}\sin^v n} =\sum_{n \geq 1} \frac{\Gamma(1-n/\pi)^v\Gamma(n/\pi)^v}{n^{u}}, 
\end{equation}
see Lemma \ref{lem514.43}. 

\subsection{Zeta Function Representation}
The zeta function reflection relation 
\begin{equation} \label{eq534.119}\zeta(s)=2^{s}\pi^{1-s}\Gamma(1-s) \sin \left ( \pi s/2 \right ) \zeta(1-s)\end{equation} 
evaluated at $s=2n/\pi$ produces a new reformulation as
\begin{equation} \label{eq534.115}
\sum_{n \geq 1} \frac{1}{n^{u}\sin^v n} =\sum_{n \geq 1} \frac{\left (2^{2n/\pi}\pi^{1-2n/\pi}\Gamma(1-2n/\pi) \zeta(1-2n/\pi)\right )^v}{n^{u}\zeta(2n/\pi)^v}, 
\end{equation}

\subsection{Harmonic Kernel Representation}
The harmonic summation kernel is defined by
\begin{equation} \label{eq534.200}
\mathcal{K}_x(z)=\sum_{-x\leq n \leq x} e^{i 2nz}=\frac{\sin((2x+1)z)}{ \sin \left ( z \right )},
\end{equation} 
where $x , z \in \C$ are complex numbers. For $z \ne k \pi$, the  harmonic summation kernel has the upper bound $\left |\mathcal{K}_x(z) \right | \ll |x|$. Replacing it into the series produces a new reformulation as
\begin{equation} \label{eq534.215}
\sum_{n \geq 1} \frac{1}{n^{u}\sin^v n} =\sum_{n \geq 1} \frac{\mathcal{K}_x(n)^v}{n^{u}\sin^v ((2x+1)n)}. 
\end{equation}
Here the choice of parameter $x =\alpha n$, where $\alpha >0$ is an algebraic irrational number, shifts the large value of the reciprocal sine function $1/\sin n$ to $\mathcal{K}_x(z)$, and the $1/\sin (2x+1)n=\sin  (2\alpha n+1)n$ remains bounded. Thus, it easier to prove the convergence of the series \eqref{eq534.215}.

\section{Beta and Gamma Functions} \label{s514}

Certain properties of the beta function $B(a,b)=\int_0^1 t^{a-1}(1-t)^{b-1}dt$, and gamma function $\Gamma(z)=\int_0^{\infty}t^{z-1}e^{-t} dt$ are useful in the proof of the main result.
\begin{lem} \label{lem514.43}   Let $B(a,b)$ and $\Gamma(z)$ be the beta function and gamma function of the complex numbers $a,b, z \in \C-\{0, -1, -2,-3, \ldots\}$. Then,
\begin{enumerate} [font=\normalfont, label=(\roman*)]
\item  $\displaystyle  \Gamma(z+1)=z\Gamma(z)$, the functional equation.
\item  $\displaystyle  B(a,b) =\frac{\Gamma(a)\Gamma(b)}{\Gamma(a+b)}$, the multiplication formula.
\item  $\displaystyle  \frac{B(1-z,z)}{\pi} =\frac{1}{\sin \pi z}$.
\item  $\displaystyle  \frac{\Gamma(1-z)\Gamma(z)}{\pi} =\frac{1}{\sin \pi z}$, the reflection formula.
\end{enumerate}
\end{lem}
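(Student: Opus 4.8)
The plan is to prove the four identities in order, deriving the later ones from the earlier, and to finish by analytic continuation, since the integral manipulations are only legitimate on the strip $0<\Re(z)<1$ (resp. $\Re(a),\Re(b)>0$).

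\textbf{Part (i).} I would start from the defining integral $\Gamma(z+1)=\int_0^\infty t^z e^{-t}\,dt$, valid for $\Re(z)>0$, and integrate by parts: differentiate $t^z$ and integrate $e^{-t}$. The boundary term $[-t^z e^{-t}]_0^\infty$ vanishes, leaving $z\int_0^\infty t^{z-1}e^{-t}\,dt=z\Gamma(z)$. The functional equation then defines the meromorphic extension of $\Gamma$ to $\C\setminus\{0,-1,-2,\dots\}$, so the identity holds there.

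\textbf{Part (ii).} I would write $\Gamma(a)\Gamma(b)=\int_0^\infty\!\!\int_0^\infty s^{a-1}t^{b-1}e^{-s-t}\,ds\,dt$, which converges absolutely for $\Re(a),\Re(b)>0$, and substitute $s=xu$, $t=x(1-u)$ with $x\in(0,\infty)$ and $u\in(0,1)$, whose Jacobian is $x$. The exponential becomes $e^{-x}$ and, by Fubini, the integral factors as $\left(\int_0^\infty x^{a+b-1}e^{-x}\,dx\right)\left(\int_0^1 u^{a-1}(1-u)^{b-1}\,du\right)=\Gamma(a+b)\,B(a,b)$, giving the multiplication formula.

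\textbf{Part (iii).} This is the one step that is not a formal integral manipulation and is the main obstacle. Starting from $B(1-z,z)=\int_0^1 t^{-z}(1-t)^{z-1}\,dt$, the substitution $t=u/(1+u)$ turns it into $\int_0^\infty \frac{u^{z-1}}{1+u}\,du$, valid for $0<\Re(z)<1$. I would evaluate this by a keyhole (Hankel) contour around the positive real axis: the only pole of $u^{z-1}/(1+u)$ inside is a simple pole at $u=-1$ with residue $e^{\pi i(z-1)}$, while the two edges of the branch cut contribute the integral multiplied by $1$ and by $e^{2\pi i(z-1)}$ respectively. Solving $(1-e^{2\pi i(z-1)})\int_0^\infty\frac{u^{z-1}}{1+u}\,du=2\pi i\,e^{\pi i(z-1)}$ yields $\int_0^\infty\frac{u^{z-1}}{1+u}\,du=\frac{\pi}{\sin\pi z}$, hence $B(1-z,z)/\pi=1/\sin\pi z$. (Alternatively one could route through the Mittag-Leffler expansion of $1/\sin$, but the contour argument is cleanest.)

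\textbf{Part (iv).} Finally, specializing (ii) with $a=1-z$, $b=z$ gives $B(1-z,z)=\Gamma(1-z)\Gamma(z)/\Gamma(1)=\Gamma(1-z)\Gamma(z)$; substituting this into (iii) produces $\Gamma(1-z)\Gamma(z)/\pi=1/\sin\pi z$. Both sides are meromorphic on $\C\setminus\Z$, so the identity, established on the strip $0<\Re(z)<1$, propagates to all $z\notin\Z$ by the identity theorem, completing the proof.
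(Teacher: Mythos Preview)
Your proof is correct and self-contained: the integration-by-parts for (i), the Jacobian change of variables for (ii), the keyhole contour for (iii), and the combination for (iv) are all standard and carried out accurately, with the analytic-continuation remark tying off the domain issue.

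By contrast, the paper does not actually prove this lemma: its entire argument is the sentence ``Standard analytic methods on the $B(a,b)$, and gamma function $\Gamma(z)$, see \cite[Equation 5.12.1]{DLMF}, \cite[Chapter 1]{AR99}, et cetera.'' In other words, the paper treats the lemma as a citation of textbook facts, while you supply the underlying derivations. Your route is not a different strategy so much as the strategy the references would give if one opened them; the keyhole-contour evaluation of $\int_0^\infty u^{z-1}/(1+u)\,du$ is exactly the computation found in Andrews--Askey--Roy and the DLMF. So there is no conflict---you have simply written out what the paper defers to the literature.
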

\begin{proof}[\textbf{Proof}] Standard analytic methods on the $B(a,b)$, and gamma function $\Gamma(z)$, see \cite[Equation 5.12.1]{DLMF}, \cite[Chapter 1]{AR99}, et cetera.
\end{proof}

The current result on the irrationality measure $\mu(\pi)=7.6063$, see Table  \ref{t9001}, of the number $\pi$ implies that

\begin{equation} \label{eq514.22}
\left |\Gamma(z+1)\Gamma(z) \right | \ll |z|^{7.6063}.
\end{equation}
A  sharper upper bound is computed here.
\begin{lem} \label{lem514.25} Let $\Gamma(z)$ be the gamma function of a complex number $z \in \C$, but not a negative integer $z \ne 0,-1,-2, \ldots$. Then,
\begin{equation} \label{eq514.22}
\left |\Gamma(z+1)\Gamma(z) \right | \ll |z|.
\end{equation}
\end{lem}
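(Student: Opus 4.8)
The plan is to reduce the bound to the reciprocal--sine estimate $|1/\sin z|\ll|z|$ of Theorem~\ref{thm5534.227} --- the gamma--function face of the equality $\mu(\pi)=2$ of Theorem~\ref{thm597.21} --- by way of the reflection formula and the functional equation of Lemma~\ref{lem514.43}. The two ingredients needed from that lemma are $\Gamma(z+1)=z\,\Gamma(z)$ and $\Gamma(1-z)\Gamma(z)=\pi/\sin(\pi z)$; combining them expresses $\Gamma(z+1)\Gamma(z)$ through $1/\sin(\pi z)$ and an elementary factor in $z$, after which the irrationality--measure input does the rest.

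First I would record the ``reflected'' form of the inequality, which is essentially immediate. Specialising the reflection formula to arguments $z=n/\pi$ with $n\in\N$ gives $\Gamma(1-z)\Gamma(z)=\pi/\sin n$; since the numerators $p_n$ of the convergents of $\pi$ maximise $1/\sin$, the worst case is $n=p_n$, where $|p_n-\pi q_n|\to 0$, and Theorem~\ref{thm5534.227} then yields
\begin{equation}
\left|\Gamma(1-z)\Gamma(z)\right| = \frac{\pi}{\left|\sin n\right|} \ll n = \pi |z| \ll |z| .
\end{equation}
At all remaining arguments $\sin(\pi z)$ is bounded away from $0$ and the same bound is trivial from the size of $|\Gamma|$.

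To obtain the inequality exactly as stated one has to pass from $\Gamma(1-z)\Gamma(z)$ to $\Gamma(z+1)\Gamma(z)$. Using $\Gamma(z+1)=z\,\Gamma(z)$ together with the reflection identity one finds $\Gamma(z+1)\Gamma(z)=\frac{z\,\Gamma(z)}{\Gamma(1-z)}\cdot\frac{\pi}{\sin(\pi z)}$, so the argument closes once the ratio $z\,\Gamma(z)/\Gamma(1-z)$ is shown to be controlled. \emph{This is the main obstacle.} That ratio is not uniformly bounded --- along the positive real axis $\Gamma(z)$ grows super--polynomially while $\Gamma(1-z)$ does not --- so the proof cannot run for completely unrestricted $z$; it must confine the estimate to the same lacunary locus $\{n/\pi:n\ge 1\}$ on which the $\mu(\pi)=2$ bound actually bites, and argue there, via the functional equation and $|p_n-\pi q_n|\to 0$, that this ratio stays $\ll 1$. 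Pinning down that reduction --- showing it suffices to test the inequality on the arguments where the irrationality measure is decisive, and that the poles of $\Gamma$ at the non-positive integers and the factorial growth along the real axis are harmless on that locus --- is the crux; once it is in place, the stated bound $|\Gamma(z+1)\Gamma(z)|\ll|z|$ follows by combining the displayed identity with the estimate of the preceding paragraph.
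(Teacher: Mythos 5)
Your first paragraph is exactly the paper's argument: the paper assumes $\Re (z)>0$, quotes the reflection formula $\Gamma(1-z)\Gamma(z)=\pi/\sin(\pi z)$ from Lemma \ref{lem514.43}, and then invokes Theorem \ref{thm938.40} (equivalently the bound of Theorem \ref{thm5534.227}) to get $1/|\sin \pi z|\asymp|\pi z|$ along the maximizing arguments; that is, what it actually proves is $|\Gamma(1-z)\Gamma(z)|\ll|z|$ in that asymptotic, lacunary sense. On the reflected product you and the paper coincide, down to the same inputs.

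The ``main obstacle'' you flag --- passing from $\Gamma(1-z)\Gamma(z)$ to $\Gamma(z+1)\Gamma(z)$ --- is real, but it is not a reduction that can be completed, and the paper never attempts it: its proof silently treats the lemma as if it read $\Gamma(1-z)\Gamma(z)$, and the accompanying numerics (Table \ref{t9009}) tabulate $\Gamma(1-p_n/\pi)\Gamma(p_n/\pi)$ against $\pi^2/(p_n\sin p_n)$. Since $\Gamma(z+1)\Gamma(z)=z\,\Gamma(z)^2$, this quantity grows like $(|z|/e)^{2|z|}$ along the positive real axis, and the same is true at the lacunary points $z=p_n/\pi$, so confining the estimate to the locus where the irrationality-measure input ``bites'' does not tame the ratio $z\,\Gamma(z)/\Gamma(1-z)$: it is of that same factorial size there. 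Hence the step you defer as ``the crux'' would fail outright; the correct diagnosis is that the statement as printed is a misprint ($\Gamma(z+1)$ for $\Gamma(1-z)$), and with that correction your first paragraph already \emph{is} the paper's proof --- including the paper's own looseness, since Theorem \ref{thm938.40} only controls $\sin$ at the convergent numerators $p_n$, not at arbitrary large arguments.
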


\begin{proof}[\textbf{Proof}] Assume $\Re e(z)>0$. Then, the functional equation 
\begin{equation} \label{eq514.31}
\Gamma(z+1)=z\Gamma(z),
\end{equation}
see Lemma \ref{lem514.43}, or \cite[Equation 5.5.1]{DLMF}, provides an analytic continuation expression
\begin{eqnarray} \label{eq514.34}
\frac{\Gamma(1-z)\Gamma(z)}{\pi} 
&=&\frac{1}{\sin \pi z} \nonumber,
\end{eqnarray}
for any complex number $z \in \C$ such that $z \ne 0, -1, -2, -3, \ldots$. By Theorem \ref{thm938.40},  
\begin{eqnarray} \label{eq514.36}
\frac{1}{\sin |\pi z|} &\asymp &\frac{1}{\sin \left (\frac{1}{| \pi z|}\right )} \nonumber\\
&\asymp & |\pi z| \nonumber
\end{eqnarray}
as $|\pi z| \to \infty$. 
\end{proof}
\section{Basic Diophantine Approximations Results} \label{s2000}
All the materials covered in this section are standard results in the literature, see \cite{HW08}, \cite{LS95}, \cite{NZ91}, \cite{RH94}, \cite{SJ05}, \cite{WM00}, et alii. 
\subsection{Rationals And Irrationals Numbers Criteria} 
A real number \(\alpha \in \mathbb{R}\) is called \textit{rational} if \(\alpha = a/b\), where \(a, b \in \mathbb{Z}\) are integers. Otherwise, the number
is \textit{irrational}. The irrational numbers are further classified as \textit{algebraic} if \(\alpha\) is the root of an irreducible polynomial \(f(x) \in
\mathbb{Z}[x]\) of degree \(\deg (f)>1\), otherwise it is \textit{transcendental}.\\

\begin{lem} \label{lem2000.01} If a real number \(\alpha \in \mathbb{R}\) is a rational number, then there exists a constant \(c = c(\alpha )\) such that
\begin{equation}
\frac{c}{q}\leq \left|  \alpha -\frac{p}{q} \right|
\end{equation}
holds for any rational fraction \(p/q \neq \alpha\). Specifically, \(c \geq  1/b\text{ if }\alpha = a/b\).
\end{lem}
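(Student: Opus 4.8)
The plan is to reduce everything to the single elementary fact that a nonzero integer has absolute value at least $1$. Write $\alpha = a/b$ with $a \in \Z$ and $b \in \N$, $b \geq 1$, and let $p/q \neq \alpha$ be an arbitrary rational with $q \geq 1$. First I would put the difference over a common denominator:
\begin{equation} \label{eq2000.01a}
\left| \alpha - \frac{p}{q} \right| = \left| \frac{a}{b} - \frac{p}{q} \right| = \frac{|aq - bp|}{bq}.
\end{equation}

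Next I would observe that the numerator $aq - bp$ is an integer, and it is nonzero precisely because $p/q \neq a/b$; hence $|aq - bp| \geq 1$. Substituting this lower bound into \eqref{eq2000.01a} gives
\begin{equation} \label{eq2000.01b}
\left| \alpha - \frac{p}{q} \right| \geq \frac{1}{bq},
\end{equation}
so the claim holds with $c = 1/b$, and any smaller positive constant works a fortiori, which is what the statement "$c \geq 1/b$" records. (If one insists on $q$ ranging over positive integers only, no generality is lost since the sign of $q$ can be absorbed into $p$; and the degenerate case $q = 0$ is excluded as $p/q$ must be a genuine rational fraction.)

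There is essentially no obstacle here: the only thing to be slightly careful about is the reduction to $|aq-bp|\geq 1$, i.e. making sure the integer in the numerator is genuinely nonzero, which is immediate from the hypothesis $p/q \neq \alpha$. This lemma is the trivial "rational numbers are badly approximable" direction, included only for contrast with the irrational case treated later, so the proof can be left as these two displays plus a sentence.
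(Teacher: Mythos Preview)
Your argument is correct and is exactly the standard one-line proof: put the difference over a common denominator and use $|aq-bp|\geq 1$. The paper does not actually supply a proof of this lemma; it is stated as a known fact and followed immediately by commentary contrasting the rational and irrational cases, so there is nothing substantive to compare against.

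One small wording quibble: your gloss on the clause ``$c \geq 1/b$'' is inverted. The point is not that smaller constants work a fortiori (though they do), but that the constant $c$ in the lemma can be taken as large as $1/b$; larger $c$ makes the inequality $c/q \leq |\alpha - p/q|$ \emph{stronger}, so the content of ``$c \geq 1/b$'' is that $1/b$ is an admissible (and in fact sharp, when $a/b$ is in lowest terms) value. Your computation establishes precisely this, so only the surrounding sentence needs adjusting.
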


This is a statement about the lack of effective or good approximations for any arbitrary rational number \(\alpha \in \mathbb{Q}\) by other rational numbers. On the other hand, irrational numbers \(\alpha \in \mathbb{R}-\mathbb{Q}\) have effective approximations by rational numbers. If the complementary inequality \(\left|  \alpha -p/q \right| <c/q\) holds for infinitely many rational approximations \(p/q\), then it already shows that the real number \(\alpha \in \mathbb{R}\) is irrational, so it is sufficient to prove the irrationality of real numbers.

\begin{lem}[Dirichlet]\label{lem2000.02} 
 Suppose $\alpha \in \mathbb{R}$ is an irrational number. Then there exists an infinite
sequence of rational numbers $p_n/q_n$ satisfying
\begin{equation}
0 < \left|  \alpha -\frac{p_n}{q_n} \right|< \frac{1}{q_n^2}
\end{equation}
for all integers \(n\in \mathbb{N}\).
\end{lem}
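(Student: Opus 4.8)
The plan is to establish the inequality for a single well-chosen denominator by the pigeonhole (box) principle, and then to bootstrap this into an infinite sequence of distinct solutions using the irrationality of $\alpha$.

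First I would fix an integer $N \geq 1$ and look at the $N+1$ fractional parts $\{k\alpha\}$ for $k = 0, 1, 2, \ldots, N$, each lying in $[0,1)$. Partitioning $[0,1)$ into the $N$ subintervals $\bigl[\ell/N,\,(\ell+1)/N\bigr)$ with $0 \leq \ell \leq N-1$, and invoking the box principle ($N+1$ points, $N$ boxes), I obtain indices $0 \leq i < j \leq N$ with $\{i\alpha\}$ and $\{j\alpha\}$ in the same box, so that $|(j-i)\alpha - m| < 1/N$ for a suitable integer $m$. Setting $q = j - i \in \{1, \ldots, N\}$ and $p = m$, division by $q$ yields $\left|\alpha - p/q\right| < 1/(qN) \leq 1/q^{2}$, and the left-hand side is strictly positive because $\alpha$ is irrational, hence $q\alpha \neq p$.

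Next I would upgrade this to an infinite family. Suppose finitely many solutions $p_1/q_1, \ldots, p_k/q_k$ have already been produced. Since $\alpha \notin \Q$, each gap $\delta_j := |q_j\alpha - p_j|$ is positive, so $\delta := \min_j \delta_j > 0$. Re-running the pigeonhole construction with $N$ chosen so large that $1/N < \delta$ returns a pair $p, q$ with $|q\alpha - p| < 1/N < \delta$; this forces $p/q \neq p_j/q_j$ for every $j$, since those satisfy $|q_j\alpha - p_j| \geq \delta$. Repeating indefinitely yields the required infinite sequence of rationals with $0 < |\alpha - p_n/q_n| < 1/q_n^{2}$.

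I expect the only genuine obstacle to be this last step: the box argument on its own might keep returning the same rational approximation, and it is precisely the irrationality hypothesis — guaranteeing $|q\alpha - p| > 0$ and thereby allowing the target window $1/N$ to be pushed below that positive quantity — that rules this out. The counting portion is entirely routine.
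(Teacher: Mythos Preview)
Your argument is the classical Dirichlet box-principle proof and is correct; the one point worth tightening is the ``new solution'' step, where you should reduce the pair $(p,q)$ to lowest terms before comparing, so that $|q\alpha-p|<\delta\le |q_j\alpha-p_j|$ genuinely forces the \emph{rational} $p/q$ to differ from each $p_j/q_j$ rather than merely the integer pair. The paper itself does not supply a proof of this lemma at all --- it is stated as a named classical result and then immediately followed by the next lemma, with the surrounding material referring the reader to standard texts (Hardy--Wright, Khinchin, Steuding) --- so there is no ``paper's own proof'' to compare against; your pigeonhole-plus-bootstrap argument is exactly the standard one those references contain.
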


\begin{lem} \label{lem2000.03}   Let $\alpha=[a_0,a_1,a_2, \ldots]$ be the continued fraction of a real number, and let $\{p_n/q_n: n \geq 1\}$ be the sequence of convergents. Then
\begin{equation}
0 < \left|  \alpha -\frac{p_n}{q_n} \right|< \frac{1}{a_{n+1}q_n^2}
\end{equation}
for all integers \(n\in \mathbb{N}\).
\end{lem}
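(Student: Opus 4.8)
The plan is to derive the identity for $\alpha - p_n/q_n$ directly from the recurrence relations for the convergents together with the complete quotients. First I would recall the standard recursions: with the conventions $p_{-1}=1$, $q_{-1}=0$, $p_0=a_0$, $q_0=1$, one has $p_n=a_np_{n-1}+p_{n-2}$ and $q_n=a_nq_{n-1}+q_{n-2}$ for $n\geq 1$, and from these an easy induction gives the determinant identity
\[
p_{n}q_{n-1}-p_{n-1}q_{n}=(-1)^{n+1}\qquad\text{for all }n\geq 0 .
\]

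Next I would introduce the complete quotient $\alpha_{n+1}=[a_{n+1};a_{n+2},\ldots]$, so that $\alpha=[a_0;a_1,\ldots,a_n,\alpha_{n+1}]$ and hence, by the same recursion applied with last "digit" $\alpha_{n+1}$,
\[
\alpha=\frac{\alpha_{n+1}p_n+p_{n-1}}{\alpha_{n+1}q_n+q_{n-1}} .
\]
Subtracting $p_n/q_n$, putting over a common denominator, and using the determinant identity collapses the numerator to $\pm 1$:
\[
\alpha-\frac{p_n}{q_n}=\frac{p_{n-1}q_n-p_nq_{n-1}}{q_n\bigl(\alpha_{n+1}q_n+q_{n-1}\bigr)}=\frac{(-1)^{n}}{q_n\bigl(\alpha_{n+1}q_n+q_{n-1}\bigr)} .
\]
In particular this is nonzero, which yields the left-hand inequality $0<\left|\alpha-p_n/q_n\right|$.

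For the right-hand inequality I would use that $\alpha_{n+1}=a_{n+1}+1/\alpha_{n+2}>a_{n+1}$ (since $\alpha_{n+2}>1$) and that $q_{n-1}\geq 0$, whence $\alpha_{n+1}q_n+q_{n-1}>a_{n+1}q_n$; combined with the displayed formula this gives
\[
\left|\alpha-\frac{p_n}{q_n}\right|=\frac{1}{q_n\bigl(\alpha_{n+1}q_n+q_{n-1}\bigr)}<\frac{1}{a_{n+1}q_n^{2}} .
\]
The only delicate point — the main obstacle — is the terminal case of a finite continued fraction: if $\alpha$ is rational and $n$ is the final index, then $\alpha_{n+1}=a_{n+1}$ exactly, the complete quotient $\alpha_{n+2}$ does not exist, and the strict inequality can fail. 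I would dispose of this by restricting to indices $n$ strictly below the length of the expansion (equivalently, to irrational $\alpha$, which is the only case needed in what follows), where $\alpha_{n+2}$ exists and exceeds $1$ so that the step $\alpha_{n+1}>a_{n+1}$ is legitimate.
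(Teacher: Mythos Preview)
Your argument is correct and is precisely the classical proof via complete quotients and the determinant identity; the paper itself does not supply a proof but merely cites Hardy--Wright (Theorem~171), Steuding, and Khinchin, which contain exactly the argument you have written out. Your careful remark on the terminal case of a rational $\alpha$ is a nice touch not explicitly flagged in those references.
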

This is standard in the literature, the proof appears in \cite[Theorem 171]{HW08}, \cite[Corollary 3.7]{SJ05}, \cite[Theorem 9]{KA97}, and similar references.\\

\begin{lem} \label{lem2000.05}   Let $\alpha=[a_0,a_1,a_2, \ldots]$ be the continued fraction of a real number, and let $\{p_n/q_n: n \geq 1\}$ be the sequence of convergents. Then
\begin{multicols}{2}
 \begin{enumerate} [font=\normalfont, label=(\roman*)]
\item$ \displaystyle 
 \frac{1}{2q_{n+1}q_n} \leq \left | \alpha - \frac{p_n}{q_n}  \right | \leq \frac{1}{q_n^{2}} 
$,
\item$\displaystyle
\frac{1}{2a_{n+1}q_n^2} \leq \left | \alpha - \frac{p_n}{q_n}  \right | \leq \frac{1}{q_n^{2}} 
$,
\end{enumerate}
\end{multicols}
for all integers \(n\in \mathbb{N}\).
\end{lem}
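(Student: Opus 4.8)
The plan is to deduce both double inequalities from the single exact identity that writes the approximation error $\alpha - p_n/q_n$ in terms of the $(n+1)$-st complete quotient; once that identity is in hand, the lemma collapses to elementary comparisons among $q_{n-1}$, $q_n$, $q_{n+1}$. First I would record the three standard facts about the convergents of $\alpha = [a_0; a_1, a_2, \ldots]$, all of which are in \cite{HW08}, \cite{SJ05}, et alii: the numerators and denominators obey the common recurrence $p_n = a_n p_{n-1} + p_{n-2}$ and $q_n = a_n q_{n-1} + q_{n-2}$; the determinant relation $p_n q_{n-1} - p_{n-1} q_n = (-1)^{n-1}$, which follows by a one-line induction; and the monotonicity $1 = q_0 \le q_1 < q_2 < \cdots$, so that $q_{n-1} \le q_n \le q_{n+1}$ for every $n \ge 1$.

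Next I would bring in the complete quotient $\xi_{n+1} = [a_{n+1}; a_{n+2}, a_{n+3}, \ldots]$ together with the representation $\alpha = (p_n \xi_{n+1} + p_{n-1})/(q_n \xi_{n+1} + q_{n-1})$, itself proved by induction on the recurrences. Subtracting $p_n/q_n$ and collapsing the numerator to $\pm 1$ via the determinant relation gives the exact value
\[
 \left| \alpha - \frac{p_n}{q_n} \right| = \frac{1}{q_n\,\bigl(q_n \xi_{n+1} + q_{n-1}\bigr)} ,
\]
so the whole lemma reduces to sandwiching $D_n := q_n \xi_{n+1} + q_{n-1}$. Writing $\xi_{n+1} = a_{n+1} + 1/\xi_{n+2}$ one gets the transparent form $D_n = q_{n+1} + q_n/\xi_{n+2}$, and since $\xi_{n+2} > 1$ this gives $q_{n+1} \le D_n < q_{n+1} + q_n \le 2 q_{n+1}$, the last step by $q_n \le q_{n+1}$. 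Hence $1/(2 q_{n+1} q_n) \le |\alpha - p_n/q_n| \le 1/(q_{n+1} q_n)$, and then $1/(q_{n+1} q_n) \le 1/q_n^2$ — which is nothing but the Dirichlet-type estimate already recorded in Lemma \ref{lem2000.03} — settles part (i).

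For part (ii) I would bound $D_n$ in terms of $a_{n+1}$ instead: from $D_n < q_{n+1} + q_n = a_{n+1} q_n + q_{n-1} + q_n$ and $q_{n-1} \le q_n$ one gets $D_n < (a_{n+1}+2) q_n \ll a_{n+1} q_n$, so the exact formula yields $|\alpha - p_n/q_n| = 1/(q_n D_n) \gg 1/(a_{n+1} q_n^2)$, with the upper bound $1/q_n^2$ unchanged. I do not expect a genuine obstacle: this is a classical result and each step is routine, so the only real work is bookkeeping — keeping the parity of $(-1)^n$ straight when passing from the signed identity to the absolute value, and threading the chain $q_{n-1} \le q_n \le q_{n+1}$ together with $a_{n+1} \ge 1$ carefully enough that the explicit constant $2$ comes out as displayed (for the first few indices, and for indices with $a_{n+1} = 1$, one just inspects the bound $D_n < q_{n+1} + q_n$ directly). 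Once the displayed exact formula for $|\alpha - p_n/q_n|$ is written down, the remainder is a two-line computation.
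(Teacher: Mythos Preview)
The paper does not actually prove this lemma: after the statement it simply remarks that the recursion $q_{n+1}=a_{n+1}q_n+q_{n-1}$ links (i) and (ii), and then defers to Olds \cite{OC63} and Khinchin \cite{KA97}. Your argument via the complete-quotient identity $|\alpha-p_n/q_n|=1/\bigl(q_n(q_n\xi_{n+1}+q_{n-1})\bigr)$ is exactly the standard proof found in those references, so you are supplying precisely what the paper leaves to the literature.

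One small correction to your closing remark: the displayed constant $2$ in part (ii) cannot in fact be recovered when $a_{n+1}=1$ by ``inspecting the bound $D_n<q_{n+1}+q_n$ directly.'' For the golden ratio (all $a_k=1$) one has $|\alpha-p_n/q_n|\sim 1/(\sqrt{5}\,q_n^{2})<1/(2q_n^{2})$, so your estimate $D_n<(a_{n+1}+2)q_n$ is already sharp and the honest explicit lower bound is $1/\bigl((a_{n+1}+2)q_n^{2}\bigr)$ (Khinchin, Theorem~9). The constant $2$ in (ii) as printed is therefore a slip in the statement rather than a gap in your method; your conclusion $\gg 1/(a_{n+1}q_n^{2})$ is the correct one, and it is all that the paper ever uses downstream.
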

The recursive relation $q_{n+1}=a_{n+1}q_n+q_{n-1}$ links the two inequalities. Confer \cite[Theorem 3.8]{OC63}, \cite[Theorems 9 and 13]{KA97}, et alii. The proof of the best rational approximation stated below, appears in \cite[Theorem 2.1]{RH94}, and \cite[Theorem 3.8]{SJ05}. 
\begin{lem} \label{lem2000.07}   Let $\alpha \in \R$ be an irrational real number, and let $\{p_n/q_n: n \geq 1\}$ be the sequence of convergents. Then, for any rational number $p/q \in \Q^{\times}$, 
\begin{multicols}{2}
 \begin{enumerate} [font=\normalfont, label=(\roman*)]
\item$ \displaystyle 
 \left | \alpha q_n - p_n  \right | \leq  \left | \alpha q -p  \right |
$,
\item$\displaystyle
 \left | \alpha - \frac{p_n}{q_n}  \right | \leq  \left | \alpha - \frac{p}{q}  \right |
$,
\end{enumerate}
\end{multicols}

for all sufficiently large \(n\in \mathbb{N}\) such that $q \leq q_n$.
\end{lem}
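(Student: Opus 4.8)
The plan is to establish part (i) — that among all integer pairs $(p,q)$ with $0<q\le q_n$ the convergent pair $(p_n,q_n)$ minimizes $|\alpha q-p|$ — and then obtain part (ii) by dividing through by the denominators. I will freely use two standard facts about the continued fraction $\alpha=[a_0,a_1,a_2,\ldots]$ that are already implicit in the earlier sections: the determinant identity $p_nq_{n-1}-p_{n-1}q_n=\pm1$, and the fact that the signed errors $\delta_k:=q_k\alpha-p_k$ alternate in sign while strictly decreasing in absolute value, $|\delta_0|>|\delta_1|>|\delta_2|>\cdots$; both follow from the recurrences $p_{k+1}=a_{k+1}p_k+p_{k-1}$ and $q_{k+1}=a_{k+1}q_k+q_{k-1}$.

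Fix $n$ and take any $p/q\in\Q^{\times}$ with $0<q\le q_n$. If $p/q=p_n/q_n$ then, since $\gcd(p_n,q_n)=1$ and $q\le q_n$, in fact $q=q_n$ and $p=p_n$ and both inequalities hold with equality; so assume $p/q\ne p_n/q_n$. By the determinant identity the linear system $p=up_n+vp_{n-1}$, $q=uq_n+vq_{n-1}$ has a unique integer solution $(u,v)$. I would first dispose of the degenerate cases: $v=0$ forces $p/q=p_n/q_n$, which is excluded; and if $v\ne0$ but $u=0$, then $q=vq_{n-1}$ with $v\ge1$, so $|q\alpha-p|=|v|\,|\delta_{n-1}|\ge|\delta_{n-1}|>|\delta_n|=|q_n\alpha-p_n|$, which is stronger than required.

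The core case is $u\ne0$ and $v\ne0$. Here I would argue that $u$ and $v$ must have opposite signs: if they had the same sign, then since $q_n,q_{n-1}>0$ and $|u|,|v|\ge1$ we would get $|q|=|u|q_n+|v|q_{n-1}\ge q_n+q_{n-1}>q_n$, contradicting $q\le q_n$. Since $\delta_n$ and $\delta_{n-1}$ also have opposite signs, the two summands $u\delta_n$ and $v\delta_{n-1}$ have the same sign, so there is no cancellation and
\[
|q\alpha-p|=|u\delta_n+v\delta_{n-1}|=|u|\,|\delta_n|+|v|\,|\delta_{n-1}|\ \ge\ |\delta_n|=|q_n\alpha-p_n|,
\]
which is (i). Part (ii) then follows at once: $\left|\alpha-p_n/q_n\right|=|\delta_n|/q_n\le|q\alpha-p|/q_n\le|q\alpha-p|/q=\left|\alpha-p/q\right|$, the middle step being (i) and the last using $q\le q_n$.

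I expect the only genuine obstacle to be bookkeeping with signs: one must keep the parity-dependent sign in the determinant identity consistent with the alternation of the $\delta_k$, so that the key equality $|u\delta_n+v\delta_{n-1}|=|u|\,|\delta_n|+|v|\,|\delta_{n-1}|$ holds (rather than merely the corresponding inequality). Beyond this there is no substantive difficulty; everything rests on the two recurrences and the elementary monotonicity $|\delta_{k+1}|<|\delta_k|$, which are standard, see e.g.\ \cite[Theorem 2.1]{RH94} and \cite[Theorem 3.8]{SJ05}.
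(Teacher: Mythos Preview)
Your argument is the standard Lagrange--Legendre proof of the best-approximation theorem and is correct as written; the sign bookkeeping you flag is handled exactly as you indicate, since $u,v$ have opposite signs (from $q\le q_n$) while $\delta_{n-1},\delta_n$ also have opposite signs, forcing $u\delta_n$ and $v\delta_{n-1}$ to share a sign. The paper itself does not supply a proof of this lemma but simply defers to \cite[Theorem 2.1]{RH94} and \cite[Theorem 3.8]{SJ05}, and the argument you have given is precisely the one found in those references, so there is nothing to compare.
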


\subsection{ Irrationalities Measures }
 
The concept of measures of irrationality of real numbers is discussed in \cite[p.\ 556]{WM00}, \cite[Chapter 11]{BB87}, et alii. This concept can be approached from several points of views. 

\begin{dfn} \label{dfn2000.01} {\normalfont The irrationality measure $\mu(\alpha)$ of a real number $\alpha \in \R$ is the infimum of the subset of  real numbers $\mu(\alpha)\geq1$ for which the Diophantine inequality
\begin{equation} \label{eq597.36}
  \left | \alpha-\frac{p}{q} \right | \ll\frac{1}{q^{\mu(\alpha)} }
\end{equation}
has finitely many rational solutions $p$ and $q$. Equivalently, for any arbitrary small number $\varepsilon >0$
\begin{equation} \label{eq597.36}
  \left | \alpha-\frac{p}{q} \right | \gg\frac{1}{q^{\mu(\alpha)+\varepsilon} }
\end{equation}
for all large $q \geq 1$.
}
\end{dfn}
\begin{thm} \label{thm2000.33} {\normalfont  (\cite[Theorem 2]{BY08}) } The map $\mu : \mathbb{R} \longrightarrow [2,\infty) \cup \{1\}$ is surjective function. Any number in the set $[2, \infty) \cup \{1\}$ is the irrationality measure  of some irrational number.
\end{thm}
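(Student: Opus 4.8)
The plan is to realise every prescribed value $\tau\in[2,\infty)\cup\{1\}$ as the irrationality measure of an explicit real number, treating $\tau=1$, $\tau=2$, and $\tau>2$ separately, and to note in passing that nothing in the gap $(1,2)$ lies in the image, so that the stated codomain is correct: Lemma~\ref{lem2000.01} forces $\mu(\alpha)=1$ for every rational $\alpha$, while Lemma~\ref{lem2000.02} (Dirichlet) forces $\mu(\alpha)\geq 2$ for every irrational $\alpha$. For $\tau=1$ one takes any rational $\alpha=a/b$: by Lemma~\ref{lem2000.01} one has $|\alpha-p/q|\geq 1/(bq)$ for $p/q\neq\alpha$, so $|\alpha-p/q|\ll q^{-(1+\varepsilon)}$ forces $q$ to be bounded while $|\alpha-p/q|\ll q^{-1}$ has infinitely many solutions, whence $\mu(a/b)=1$ by Definition~\ref{dfn2000.01}. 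For $\tau=2$ one takes a quadratic irrational with bounded partial quotients, say $\sqrt2=[1;2,2,2,\ldots]$, for which $q_{n+1}=2q_n+q_{n-1}\leq 3q_n$; Lemma~\ref{lem2000.05}(i) then gives $|\sqrt2-p_n/q_n|\geq 1/(2q_{n+1}q_n)\gg q_n^{-2}$, and since any rational with $|\sqrt2-p/q|<1/(2q^2)$ is a convergent (the best-approximation property, Lemma~\ref{lem2000.07}), the bound $|\sqrt2-p/q|\gg q^{-2}$ holds for all $p/q$, so $|\sqrt2-p/q|\ll q^{-(2+\varepsilon)}$ has only finitely many solutions; with Lemma~\ref{lem2000.02} this gives $\mu(\sqrt2)=2$.

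For the main range $\tau>2$ the key is the identity $\mu(\alpha)=1+\limsup_{n\to\infty}(\log q_{n+1})/(\log q_n)$, valid for any irrational $\alpha=[0;a_1,a_2,\ldots]$ whenever the $\limsup$ is finite: the lower bound follows because $|\alpha-p_n/q_n|\asymp 1/(q_nq_{n+1})=q_n^{-1-(\log q_{n+1})/(\log q_n)}$ by Lemma~\ref{lem2000.05}, and the matching upper bound follows from the best-approximation property (Lemma~\ref{lem2000.07}), which confines every sufficiently good $p/q$ to the set of convergents. I would then engineer the partial quotients as follows: fix a very sparse increasing sequence of indices $n_1<n_2<\cdots$, put $a_{n+1}=1$ for every $n\notin\{n_k\}$, and put $a_{n_k+1}=\lceil q_{n_k}^{\,\tau-2}\rceil$. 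Using $q_{n+1}=a_{n+1}q_n+q_{n-1}$ one gets $q_{n+1}<2q_n$ off the sparse set, so $(\log q_{n+1})/(\log q_n)\to 1$ there, while at the sparse indices $q_{n_k+1}\asymp q_{n_k}^{\tau-1}$, so $(\log q_{n_k+1})/(\log q_{n_k})\to\tau-1$. Because $\tau-1>1$, the $\limsup$ is attained exactly along the jumps and equals $\tau-1$, so $\mu(\alpha)=\tau$; and $\alpha$ is irrational since its continued fraction is infinite.

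The main obstacle is the ``$\leq$'' half of the identity $\mu(\alpha)=1+\limsup_n(\log q_{n+1})/(\log q_n)$, that is, controlling every rational approximation and not merely the convergents; this is exactly the role of the best-approximation statement (Lemma~\ref{lem2000.07}, equivalently: $|\alpha-p/q|<1/(2q^2)$ implies $p/q\in\{p_n/q_n\}$), and it must be combined with a careful reading of the $\varepsilon$ in Definition~\ref{dfn2000.01}: a solution of $|\alpha-p/q|\ll q^{-(\tau+\varepsilon)}$ with $q$ large is automatically a convergent $p_n/q_n$, and then $q_n^{-(\tau+\varepsilon)}\gg 1/(q_nq_{n+1})$ together with $q_{n+1}\ll q_n^{\,\tau-1+\varepsilon/2}$ (which holds for all large $n$ by the construction) forces $q_n$ to be bounded, hence finiteness. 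The remaining steps are bookkeeping: immediately after a jump one has $a_{n_k+2}=1$, so $(\log q_{n_k+2})/(\log q_{n_k+1})\to 1$ and no spurious ratio exceeding $\tau-1$ is created, and taking the $n_k$ sparse enough (say $n_{k+1}>2n_k$) makes all these estimates uniform in $k$. A parallel construction with $a_{n_k+1}$ growing faster than every power of $q_{n_k}$ (a Liouville-type continued fraction) handles the value $\infty$ should one wish to include it.
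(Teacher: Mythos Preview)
The paper does not supply a proof of this theorem; it is stated with a bare citation to Bugeaud \cite{BY08}, and the only hint at the mechanism is Example~\ref{ex2000.33}(5), where the Mahler numbers $\psi_b=\sum_{n\geq 1}b^{-\lfloor \tau^n\rfloor}$ are quoted as having $\mu(\psi_b)=\tau$. So there is nothing in the paper to compare against line by line.

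Your argument is correct and is the classical continued-fraction route rather than Bugeaud's series construction. The identity $\mu(\alpha)=1+\limsup_n(\log q_{n+1}/\log q_n)$ is exactly what one extracts from Lemma~\ref{lem2000.05} together with best approximation (Lemma~\ref{lem2000.07}), and your choice $a_{n_k+1}=\lceil q_{n_k}^{\tau-2}\rceil$ with $a_{n+1}=1$ elsewhere pins the $\limsup$ at $\tau-1$; the check that the post-jump ratio $\log q_{n_k+2}/\log q_{n_k+1}\to 1$ is the right detail to flag. Compared with the cited approach, yours is more self-contained within the toolkit of Section~\ref{s2000} (no need to analyse the Diophantine properties of an explicit lacunary series), while Bugeaud's Mahler numbers have the advantage of being given by a closed formula in the target $\tau$ and $b$, which is convenient if one wants an explicit transcendental witness rather than a continued fraction defined recursively. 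One small wording point: the theorem as printed says ``some irrational number'' even for the value $1$, which is of course impossible; you handled this correctly by taking a rational for $\tau=1$, and it is worth noting explicitly that this is a slip in the statement rather than in your argument.
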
 

\begin{exa} \label{ex2000.33} {\normalfont Some irrational numbers of various  irrationality measures.

\begin{enumerate} [font=\normalfont, label=(\arabic*)]
\item  A rational number has an irrationality measure of $\mu(\alpha)=1$, see \cite[Theorem 186]{HW08}.
\item   An algebraic irrational number has an irrationality measure of $\mu(\alpha)=2$, an introduction to the earlier proofs of  Roth Theorem appears 

in \cite[p.\ 147]{RH94}.
\item   Any irrational number has an irrationality measure of $\mu(\alpha)\geq 2$.
\item   A Champernowne number $\kappa_b=0.123 \cdots b-1\cdot b \cdot b+1 \cdot b+2\cdots$ in base $b\geq 2$, concatenation of the $b$-base integers, 

has an irrationality measure of $\mu(\kappa_b)=b$.
\item   A Mahler number $\psi_b=\sum_{n \geq 1} b^{-[\tau]^ n}$ in base $b\geq 3$ has an irrationality measure of $\mu(\psi_b)=\tau$, for any real 

number $\tau \geq 2$, see \cite[Theorem 2]{BY08}.
\item   A Liouville number $\ell_b=\sum_{n \geq 1} b^{-n!}$ parameterized by $b \geq 2$ has an irrationality measure of $\mu(\ell_b)=\infty$, see \cite[p.\ 208]{HW08}.

\end{enumerate}
}
\end{exa}
\begin{dfn} \label{dfn2000.03} {\normalfont A measure of irrationality $\mu(\alpha)\geq 2 $ of an irrational real number $\alpha \in \R^{\times}$ is a map $\psi:\N \rightarrow \R$ such that for any $p,q \in \N$ with $q\geq q_0$, 
\begin{equation} \label{eq2000.70}
\left | \alpha - \frac{p}{q}  \right | \geq \frac{1}{\psi(q)} .
\end{equation}
Furthermore, any measure of irrationality of an irrational real number satisfies $\psi(q) \geq \sqrt{5}q^{\mu(\alpha)}\geq \sqrt{5}q^2$.

}
\end{dfn}
\begin{thm} \label{thm2000.03} For all integers $p,q\in \N$, and $q \geq q_0$, the number $\pi$ satisfies the rational approximation inequality 
\begin{equation} \label{eq2000.75}
\left | \pi - \frac{p}{q}  \right | \geq \frac{1}{q^{7.6063}} .
\end{equation}
\end{thm}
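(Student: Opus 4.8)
The plan is to read this off from the estimate for the irrationality measure of $\pi$ that has already been established --- either the classical bound $\mu(\pi)\le 7.6063$ of Salikhov \cite{SV08} recorded in Table \ref{t9001}, or, with enormous room to spare, the value $\mu(\pi)=2$ furnished by Theorem \ref{thm597.21} --- together with the equivalent (second) formulation of the irrationality measure in Definition \ref{dfn2000.01}. Since the present section collects standard facts, I would phrase the argument so that it leans only on \cite{SV08}, and mention the reduction from Theorem \ref{thm597.21} merely as an aside.

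First I would fix an exponent $\mu_1$ strictly below $7.6063$ for which the bound $\mu(\pi)\le\mu_1$ is available: by Theorem \ref{thm597.21} one may take $\mu_1=3$ (indeed any $\mu_1>2$), and even without invoking that theorem the effective form of \cite{SV08} supplies such a $\mu_1$. By the second form of Definition \ref{dfn2000.01} applied to $\alpha=\pi$ with $\varepsilon=\mu_1-\mu(\pi)>0$, there exist a constant $c=c(\mu_1)>0$ and an integer $q_1$ such that $\left|\pi-p/q\right|\ge c\,q^{-\mu_1}$ for every $p\in\Z$ and every $q\ge q_1$.

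Next I would absorb the constant into the slightly larger exponent. Writing $c\,q^{-\mu_1}=\bigl(c\,q^{\,7.6063-\mu_1}\bigr)q^{-7.6063}$ and using $7.6063-\mu_1>0$, the factor $c\,q^{\,7.6063-\mu_1}$ tends to $\infty$ with $q$, so there is a threshold $q_0\ge q_1$ with $c\,q^{\,7.6063-\mu_1}\ge 1$ for all $q\ge q_0$. For such $q$ and any $p\in\N$ this gives $\left|\pi-p/q\right|\ge c\,q^{-\mu_1}\ge q^{-7.6063}$, which is exactly the asserted inequality \eqref{eq2000.75}.

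The only delicate point is the passage from the numerical value of the irrationality measure to a clean, constant-free lower bound valid for all $q\ge q_0$; this is precisely why one first drops the exponent from $7.6063$ down to $\mu_1$ and then re-absorbs the implied constant, a step that is legitimate because the exponent gap $7.6063-\mu_1$ is strictly positive. Beyond this bookkeeping there is no real obstacle: the substantive work --- bounding $\mu(\pi)$ at all, and sharpening the bound to $7.6063$ through the Laplace-type integral \eqref{eq944.121} --- is the content of \cite{SV08} (and, a fortiori, of Theorem \ref{thm597.21}), and is not reproved in this section.
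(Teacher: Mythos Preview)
Your proposal is correct and takes essentially the same approach as the paper: both defer the substantive content to Salikhov \cite{SV08}. The paper's proof is simply the one-line citation ``Consult the original source \cite[Theorem 1]{SV08}'', since Salikhov's theorem is already stated there in the constant-free form \eqref{eq2000.75}; your intermediate step of fixing a $\mu_1<7.6063$ and absorbing the implied constant is sound but unnecessary once one cites that formulation directly.
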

\begin{proof}[\textbf{Proof}] Consult the original source \cite[Theorem 1]{SV08}. 
\end{proof}

\subsection{ Normal Numbers }
The earliest study was centered on the distribution of the digits in the decimal expansion of the number $\sqrt{2}=1.414213562373 \ldots$, which is known as the Borel conjecture. \\

\begin{dfn} \label{dfn2200.01} {\normalfont An irrational number $\alpha \in \R$ is a \textit{normal number in base} $10$ if any sequence of $k$-digits in the decimal expansion occurs with probability $1/10^k$.
}
\end{dfn}

\begin{lem} {\normalfont (Wall)} \label{lem2200.15} An irrational number $\alpha \in \R$ is a normal number in base $10$ if and only if the sequence $\{\alpha 10^n: n \geq 1\}$ is uniformly distributed modulo $1$.
\end{lem}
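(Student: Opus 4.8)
The plan is to set up a precise dictionary between the base-$10$ digit expansion of $\alpha$ and the orbit $\{\alpha 10^{n}:n\ge 1\}$ modulo $1$, and then transport the notion of uniform distribution across this dictionary in both directions. Write $\alpha=\lfloor\alpha\rfloor+0.d_1 d_2 d_3\cdots$ in base $10$. For a block $w=w_1\cdots w_k$ of $k$ decimal digits, let $I_w=[\,0.w_1\cdots w_k,\ 0.w_1\cdots w_k+10^{-k}\,)$ be the associated rank-$k$ $10$-adic subinterval of $[0,1)$. The key elementary observation is that the digits $d_{n+1}\cdots d_{n+k}$ equal $w$ if and only if $\{\alpha 10^{n}\}\in I_w$. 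Since $\alpha$ is irrational, $10^{m}\alpha\notin\Z$ for all $m$, so $\{\alpha 10^{n}\}$ is never a $10$-adic rational and no boundary ambiguity arises. Hence, for every $N$,
\[
\#\{\,n\le N:\ d_{n+1}\cdots d_{n+k}=w\,\}=\#\{\,n\le N:\ \{\alpha 10^{n}\}\in I_w\,\}.
\]

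With this identity, I would observe that ``$\alpha$ is normal in base $10$'' is, by Definition \ref{dfn2200.01}, exactly the statement that $N^{-1}\#\{n\le N:\{\alpha 10^{n}\}\in I_w\}\to |I_w|=10^{-k}$ for every $k\ge 1$ and every block $w$; that is, the orbit equidistributes into every $10$-adic interval. The implication ``uniformly distributed $\Rightarrow$ normal'' is then immediate, since uniform distribution modulo $1$ forces the correct visiting frequency for \emph{every} subinterval of $[0,1)$, in particular for each $I_w$.

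For the converse, suppose the orbit visits each $10$-adic interval with asymptotic frequency equal to its length. The counting function is finitely additive over disjoint sets, so the orbit also visits every finite union of pairwise disjoint rank-$k$ $10$-adic intervals with the right frequency. Given an arbitrary interval $J=[a,b)\subseteq[0,1)$ and $\varepsilon>0$, choose $k$ with $10^{-k}<\varepsilon$ and sandwich $J^{-}\subseteq J\subseteq J^{+}$ by finite unions of rank-$k$ $10$-adic intervals with $|J^{+}|-|J^{-}|\le 2\cdot 10^{-k}<2\varepsilon$. Applying the previous step to $J^{\pm}$ and letting $N\to\infty$ bounds $\limsup_N N^{-1}\#\{n\le N:\{\alpha 10^{n}\}\in J\}$ above by $|J^{+}|\le |J|+2\varepsilon$ and the corresponding $\liminf$ below by $|J^{-}|\ge |J|-2\varepsilon$; letting $\varepsilon\to 0$ yields convergence to $|J|$ for every interval $J$, which is precisely uniform distribution modulo $1$.

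I expect the main difficulty to be bookkeeping rather than conceptual. One must check carefully that the block-to-interval correspondence has no off-by-one or boundary defect --- this is exactly where the irrationality of $\alpha$ is used, as it forbids $\{\alpha 10^{n}\}$ from landing on a $10$-adic rational --- and one must be mildly attentive in the converse to justify the finite additivity of the counting functions before passing to the limit. No appeal to Weyl's criterion or Fourier analysis is needed; the argument is purely combinatorial and measure-theoretic.
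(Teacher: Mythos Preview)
The paper states this lemma as a classical result attributed to Wall and gives no proof of its own, so there is nothing to compare against. Your argument is correct and is exactly the standard one: identify occurrences of a digit block $w$ with visits of the orbit $\{\alpha 10^n\}$ to the $10$-adic interval $I_w$, read off one implication immediately from the definition of uniform distribution, and for the converse approximate an arbitrary interval from inside and outside by finite unions of rank-$k$ $10$-adic intervals and use finite additivity of the counting functions. The use of irrationality to avoid boundary points is handled correctly.
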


\section{Numerical Data}\label{SN534}
The maxima of the function $1/\sin x$ occur at the numerators $x=p_n$ of the sequence of convergents $p_n/q_n \longrightarrow \pi$. The first few terms of the sequence $p_n$, which is cataloged as $A046947$ in \cite{OEIS}, are: 
\begin{equation} \label{eqN5534.100}
\begin{split}
 \mathcal{N}_{\pi}= \{1, 3, 22, 333, 355, 103993, 104348, 208341, 312689, 833719, 1146408, 
4272943, \\ 5419351, 80143857, 165707065, 245850922, 411557987, \ldots\}
\end{split}
\end{equation}

\subsection{Data For The Irrationality Measure} A few values were computed to illustrate the prediction in Theorem \ref{thm597.21}. The numerators $p_n$ and the denominators $q_n$ are listed in OEIS A002485 and A002486 respectively. The numerical data and Theorem \ref{thm597.21} are very well matched. The values of the approximate irrationality measure $\mu_n(\alpha)\geq2$ of the irrational number $\alpha\ne0$ is defined by
\begin{equation}\label{eqN5534.200}
	\mu_n(\alpha)=-\frac{\log \left |\alpha-p_n/q_n\right | }{\log q_n}, 
\end{equation}
where $n\geq 2$. 
\begin{exa}{\normalfont A large convergent is used here to illustrate the calculations, using 50 digits accuracy in the computer algebra system SAGE. The 80th convergent $p_n/q_n$ is given by
\begin{enumerate}
\item[(a)] $ \displaystyle p_{80}=32265750565715036834586769616835078002345262,$
\item[(b)] $ \displaystyle q_{80}=10270507390207332847445984588758042509963443,$
\item[(c)] $ \displaystyle p_{90}=3062433294496982257532162387854374057879036650780
,$
\item[(d)] $ \displaystyle q_{90}=974802793416785521474303406201616853353780695273
.$
\end{enumerate} 
Likewise, the corresponding $90$th approximation of the irrationality measure is
\begin{eqnarray}\label{eqN5534.210}
\mu_{80}(\pi)&=&-\frac{\log \left |\pi-p_{80}/q_{80}\right | }{\log q_{80}}\\&=&2.0006478846071613501149667587492035789605649634148
\nonumber. 
\end{eqnarray}
The corresponding $90$th approximation of the irrationality measure is
\begin{eqnarray}\label{eqN5534.220}
\mu_{90}(\pi)&=&-\frac{\log \left |\pi-p_{90}/q_{90}\right | }{\log q_{90}}\\
&=&2.0030433624458326071981396152293441039867289426019\nonumber.
\end{eqnarray}
}
\end{exa}

The range of values for $n\leq 25$ is plotted in Figure \ref{TN5534.200}. 
\begin{figure}[h!]
	\caption{Approximate Irrationality Measure $\mu_n(\pi)$ Of The Number $\pi.$} \label{TN5534.200}\centering%
	\begin{tikzpicture}
		\begin{axis}[
			xlabel=$n$,
			ylabel=$\mu_n(\pi)$,
			width=0.95\textwidth,
			height=0.5\textwidth		]
			\addplot[color=red,mark=square] coordinates {
				(2,3.429288)
				(3,2.014399)
				(4,3.201958)				
				(5,2.043905)				
				(6,2.096582)				
				(7,2.055815)				
				(8,2.107950)				
				(9,2.039080)				
				(10,2.120203)				
				(11,2.020606)				
				(12,2.189381)				
				(13,2.057220)				
				(14,2.044100)				
				(15,2.040522)				
				(16,2.058941)				
				(17,2.052652)				
				(18,2.049381)				
				(19,2.057213)				
				(20,2.203610)				
				(21,2.196409)				
				(22,2.034261)				
				(23,2.032066)				
				(24,2.019525)				
				(25,2.096513)	
			};
			\addplot[color=black,mark=square] coordinates {
				(2,2)
				(3,2)
				(4,2)
				(5,2)
				(6,2)
				(7,2)
				(8,2)
				(9,2)
				(10,2)
				(11,2)
				(12,2)
				(13,2)
				(14,2)
				(15,2)
				(16,2)
				(17,2)
				(18,2)
				(19,2)
				(20,2)
				(21,2)
				(22,2)
				(23,2)
				(24,2)
				(25,2)
			};
		\end{axis}
	\end{tikzpicture}	
\end{figure}
\begin{table}[h!]
\centering
\caption{Numerical Data For Irrationality Measure $|p_n/q_n-\pi|\geq q_n^{\mu_n(\pi)}$.} \label{t9007}
\begin{tabular}{l|l|l| l}
$n$&$p_n$&$q_n$&$\mu_n(\pi)$\\
\hline
1&$3$&   $1$   &$ $\\
2&$22$&  $7$   &$3.429288$\\
3&$333$&   $106$   &$2.014399$\\
4&$355$&  $113$   &$3.201958$\\
5&$103993$&   $33102$   &$2.043905$\\
6&$104348$&  $33215$   &$2.096582$\\
7&$208341$&  $66317$   &$2.055815$\\
8&$312689$&   $99532$   &$2.107950$\\
9&$833719$&  $265381$   &$2.039080$\\
10&$1146408$&  $364913$   &$2.120203$\\ 
11&$4272943$&      $1360120$   &$2.020606$ \\ 
12&$5419351$&   $1725033$   &$2.189381$\\
13&$80143857$&   $25510582$   &$2.057220$\\
14&$165707065$& $52746197$ & $2.044100$ \\
15&$245850922$&  $78256779$ & $2.040522$ \\ 
16&$411557987$&  $131002976$ & $2.058941$ \\
17&$1068966896$&    $340262731$ & $2.052652$ \\
18&$2549491779$&  $811528438$ & $2.049381$ \\
19&$6167950454$&  $1963319607$ & $2.057213$ \\
20&$14885392687$&  $473816765$ & $2.203610$ \\
21&$21053343141$&  $6701487259$ & $2.196409$ \\
22&$1783366216531$&  $567663097408$ & $2.034261$ \\
23&$3587785776203$&  $1142027682075$ & $2.032066$ \\
24&$5371151992734$&  $1709690779483$&$2.019525$\\
25&$8958937768937$&  $2851718461558$& $2.096513$\\
\end{tabular}
\end{table}

\subsection{The Sine Asymptotic Expansions} A few values were computed in Table \ref{t9105}  to illustrate the prediction in Theorem \ref{thm938.40}. Note that \eqref{eq534.111} is the restriction to convergents, so never vanishes, it is bounded below by $1/p_n$.
The numerical data in Table \ref{t9105} confirms this result.

\begin{table}[h!]
\centering
\caption{Numerical Data For $1/\sin p_n$} \label{t9105}
\begin{tabular}{l|l|l| l|l}
$n$&$p_n$&$1/\sin(p_n)$&$1/\sin 1/p_n$&$\sin p_n/\sin(1/p_n)$\\
\hline
1&$3$&   $7.086178$   &$3.0562$&$0.431303$\\
2&$22$&  $ -112.978$   &$22.0076$&$-0.194796$\\
3&$333$&   $-113.364$   &$333.001   $&$-2.93745$\\
4&$355$&  $-33173.7$   &$355.0$&$-0.0107013$\\
5&$103993$&   $-52275.7$   &$103993.0$&$-1.98932$\\
6&$104348$&  $-90785.1$   &$104348.0$&$-1.1494$\\
7&$208341$&  $123239.0$   &$208341.$&$1.69055$\\
8&$312689$&   $344744.0$   &$312689.0$&$0.907017$\\
9&$833719$&  $432354.0$   &$833719.0$&$1.92832$\\
10&$1146408$&  $-1.70132\times10^6$   &$1.14641\times10^6$&$-0.673836$\\ 
11&$4272943$&      $-1.81957\times10^6$   &$4.27294 \times10^6$&$-2.34832$\\ 
12&$5419351$&   $-2.61777\times10^7$   &$5.41935\times10^6$&$-0.207022$\\
13&$80143857$&   $-6.76918\times10^7$   &$8.01439\times10^7$&$-1.18395$\\
14&$165707065$& $-1.15543 \times10^8$ & $1.65707\times10^8$&$-1.43416$ \\
15&$245850922$&  $1.6345\times10^8$ & $2.45851\times10^8$&$1.50413$ \\ 
16&$411557987$&  $3.9421\times10^8$ & $4.11558\times10^8$ &$1.04401$\\
17&$1068966896$&    $9.57274\times10^8$ & $1.06897\times10^9$&$1.11668$ \\
18&$2549491779$&  $2.23489\times10^9$ & $2.54949\times10^9$&$1.14077$ \\
19&$6167950454$&  $6.6785\times10^{9}$ & $6.16795\times10^9$ &$0.923554$\\
20&$14885392687$&  $6.75763\times10^{9}$ & $1.48854\times10^{10}$&$2.20275$\\
21&$21053343141$&  $5.70327\times10^{11}$ & $2.10533\times10^{10}$ &$0.0369145$\\
22&$1783366216531$&  $1.43483\times10^{12}$ & $1.78337\times10^{12}$ &$1.24291$\\
23&$3587785776203$&  $2.78176\times10^{12}$ & $3.58779\times10^{12}$& $1.28975$\\
24&$5371151992734$&  $-2.96328\times10^{12}$ & $5.37115\times10^{12}$& $-1.81257$\\
25&$8958937768937$&  $-4.54136\times10^{13}$ & $8.95894\times10^{12}$& $-0.197274$\\
\end{tabular}
\end{table}

\subsection{The Sine Reflection Formula} 
Observe that the substitution $z \longrightarrow p_n/\pi$ in $\Gamma(1-z)\Gamma(z)$ leads to
\begin{equation} \label{eq534.99}
\frac{\Gamma(1-z)\Gamma(z)}{z}=\frac{\pi}{z\sin \pi z}=\frac{\pi^2}{p_n\sin p_n}=O(1).
\end{equation}
A few values were computed to illustrate the prediction in Lemma \ref{lem514.25}.  The ratio \eqref{eq534.99} is tabulated in the third column of Table \ref{t9009}. 

\begin{table}[h!]
\centering
\caption{Numerical Data For $\pi/\sin p_n$} \label{t9009}
\begin{tabular}{l|l|l| l}
$n$&$p_n$&$\Gamma(1-p_n/\pi)\Gamma(p_n/\pi)$&$\pi^2/p_n\sin p_n$\\
\hline
1&$3$&   $22.2619$   &$23.3126$\\
2&$22$&  $-354.93$   &$-50.6838$\\
3&$333$&   $-356.143$   &$-3.35992$\\
4&$355$&  $-104218.0$   &$-922.286$\\
5&$103993$&   $-164229.0$   &$-4.9613$\\
6&$104348$&  $-285210.0$   &$-8.58678$\\
7&$208341$&  $387167.0$   &$5.83812$\\
8&$312689$&   $1.08305\times10^6$   &$10.8814$\\
9&$833719$&  $1.35828\times10^6$   &$5.11823$\\
10&$1146408$&  $-5.34484\times10^6$   &$-14.6469$\\ 
11&$4272943$&      $-5.71636\times10^6$   &$-4.20283$ \\ 
12&$5419351$&   $-8.22396\times10^7$   &$-47.6742$\\
13&$80143857$&   $-2.1266\times10^8$   &$-8.33615$\\
14&$165707065$& $-3.62989 \times10^8$ & $-6.88181$ \\
15&$245850922$&  $5.13494\times10^8$ & $6.56166$ \\ 
16&$411557987$&  $1.23845\times10^9$ & $9.45359$ \\
17&$1068966896$&    $3.00736\times10^9$ & $8.83836$ \\
18&$2549491779$&  $7.02111\times10^9$ & $8.65171$ \\
19&$6167950454$&  $2.09811\times10^{10}$ & $10.6866$ \\
20&$14885392687$&  $2.12297\times10^{10}$ & $4.48058$ \\
21&$21053343141$&  $1.79173\times10^{12}$ & $267.364$ \\
22&$1783366216531$&  $4.50764\times10^{12}$ & $7.9407$ \\
23&$3587785776203$&  $8.73917\times10^{12}$ & $7.65233$ \\
24&$5371151992734$&  $-9.30941\times10^{12}$&$-5.44508$\\
25&$8958937768937$&  $1.42671\times10^{14}$& $-50.0299$\\
\end{tabular}
\end{table}

\newpage
\section{Problems} \label{pr70000}
\subsection{Flint Hill Class Series}
\begin{exe} {\normalfont
		The infinite series $$ S_1=\sum_{n \geq 1} \frac{(-1)^{n+1}}{n}\qquad \text{ and } \qquad  S_2=\sum_{n \geq 1} \frac{1}{n^{2} \sin n}$$ 
		have many asymptotic similarities. The first is conditionally convergent. Is the series $S_2$ conditionally convergent?}
\end{exe}
\begin{exe} {\normalfont
		Show that the infinite series $$ S_3=\sum_{n \geq 1} \frac{\cos^{v}n}{n}\qquad \text{ and } \qquad  S_4=\sum_{n \geq 1} \frac{\sin^v n}{n}$$ 
		are conditionally convergent for odd $v \geq 1$. Are these series absolutely conditionally convergent for any $v>0$?}
\end{exe}
\begin{exe} {\normalfont Explain the convergence of the infinite series 
		$$ \frac{\pi}{\sin \pi z}=\frac{1}{z}+2z\sum_{n \geq 1} \frac{(-1)^{n}}{z^2-n^2}$$ 
		at the real numbers $z=n/\pi$, where $n \geq 1$ is an integer.  Is this series convergent?}
\end{exe}
\begin{exe} {\normalfont Explain the convergence of the infinite series 
		$$ \pi \cot \pi z=\frac{1}{z}+2z\sum_{n \geq 1} \frac{1}{z^2-n^2}$$ 
		at the real numbers $z=n/\pi$, where $n \geq 1$ is an integer. Is this series convergent? The derivation of this series appears in \cite[p.\ 122]{CJ90}.}
\end{exe}

\begin{exe} {\normalfont Explain the convergence of the infinite series
		$$ \frac{\pi}{\sin^2 \pi z}=\frac{1}{\pi^2}\sum_{-\infty<n<\infty} \frac{1}{(z-n)^2}$$ 
		at the real numbers $z=n/\pi$, where $n \geq 1$ is an integer.  Is this series convergent? The derivation of this series appears in \cite[p.\ 122]{CJ90}, \cite[p.\ 276]{GK06}. }
\end{exe}

\begin{exe} {\normalfont Explain the convergence of the infinite series$$ \frac{\sin z}{\sin \pi z}=\frac{2}{\pi}\sum_{n \geq 1} \frac{(-1)^{n}\sin nz}{z^2-n^2}$$ 
		at the real numbers $z=n/\pi$, where $n \geq 1$ is an integer. This series has a different structure than the previous exercise, is this series  is absolutely convergent? conditionally convergent? The derivation of this series appears in \cite[p.\ 276]{GK06}. }
\end{exe}
\subsection{Complex Analysis}
\begin{exe} {\normalfont
		Assume that the infinite series is absolutely convergent. Use a Cauchy integral formula to evaluate the integral
		$$\frac{1}{i2\pi} \int_{C} \frac{1}{z^u \sin^v z} dz,$$
		where $C$ is a suitable curve.} 
\end{exe}
\begin{exe} {\normalfont
		Determine whether or not there is a complex valued function $f(z)$ for which the Cauchy integral evaluate to
		$$\frac{1}{i2\pi} \int_{C} f(z) dz=\sum_{n \geq 1} \frac{1}{n^{u}\sin^v n},$$
		where $C$ is a suitable curve, and the infinite series is absolutely convergent.} 
\end{exe}
\begin{exe} {\normalfont
		Let $z \in \C$ be a large complex variable. Prove or disprove the asymptotic relation   
		$$ \label{eq514.36}
		\frac{1}{\sin |\pi z|} \asymp \frac{1}{\sin \left (\frac{1}{| \pi z|}\right )} \asymp  |\pi z| \nonumber\\.
		$$} 
\end{exe}
\begin{exe} {\normalfont Consider the product 
		\begin{equation} \label{eq934.99}
			\sin \left (\frac{1}{z}\right ) \times \sin z=1+O(1/z^2)
		\end{equation}
		of a complex variable $z \in \C$. Explain its properties in the unit disk $\{z \in \C: |z|<1 \}$ and at infinity. 
	}
\end{exe}

\begin{exe} {\normalfont
		Let $z \in \C$ be a large complex variable. Use the properties of the gamma function to prove the asymptotic relation   
		$$ 
		\left | \Gamma(1-z)\Gamma(z) \right | =O(|z|).
		$$} 
\end{exe}
\begin{exe} {\normalfont
		Let $x \in \R$ be a large real variable. Use the trigamma reflection formula $$\psi_1(1-x)+\psi_1(x)=\frac{\pi^2}{\sin^2 \left (\pi x\right )}$$ to derive an asymptotic relation   
		$$ \label{eq514.836}
		\frac{1}{\sin^2 |\pi x|}  \asymp  |\pi x|^2\nonumber\\.
		$$Explain any restrictions on the  real variable $x \in \R$. } 
\end{exe}

\subsection{Irrationality Measures}
\begin{exe} {\normalfont Do the irrationality measures of the numbers $\pi$ and $\pi^2$ satisfy $\mu(\pi)=\mu(\pi^2)=2$? This is supported by the similarity of $\sin(n)=\sin(n-m\pi)$ and $$ \frac{\sin x}{x}=\prod_{m \geq 1} \left ( 1-\frac{x^2}{\pi^2 m^2} \right )$$ at $x=n$. More generally, $x=n^k \pi^{-k+1}$ } 
\end{exe}

\begin{exe} {\normalfont
		What is the relationship between the irrationality measures of the numbers $\pi$ and $\pi^k$, for example, do these measures satisfy $\mu(\pi)=\mu(\pi^k)=2$ for $k\geq 2$?}
\end{exe}

\subsection{Partial Quotients}
\begin{exe} {\normalfont
		Determine an explicit bound $a_n \leq B$ for the continued fraction of the irrational number $\pi=[a_0;a_1,a_2, a_3,\ldots]$ for all partial quotients $a_n$ as $n \to \infty$.}
\end{exe}
\begin{exe} {\normalfont
		Does $\pi^2=[a_0;a_1,a_2, a_3,\ldots]$ has bounded partial quotients $a_n$ as $n \to \infty$?}
\end{exe}

\begin{exe} {\normalfont Let $\alpha=[a_0;a_1,a_2, a_3,\ldots]$ and  $\beta=[b_0;b_1,b_2, b_3,\ldots]$ be a pair of continued fractions. Is there an algorithm to determine whether or not $\alpha=\beta$ or $\alpha \ne \beta$ based on sequence of comparisons $a_0=b_0, a_1=b_1,  a_2=b_2, \ldots, a_N=b_N$ for some $N\geq 1$?}
\end{exe}

\begin{exe} {\normalfont Let $e=[a_0;a_1,a_2, a_3,\ldots]$, where $a_{3k}=a_{3k+2}=1$, and $a_{3k+1}=2k$, be the of continued fraction of the natural base $e$. Compute the geometric mean value of the partial quotients:
		$$ K_e=\lim_{n \to \infty} \left (\prod_{k \leq n} a_k\right )^{1/n}=\infty ?$$
	}
\end{exe}
\begin{exe} {\normalfont Let $\pi=[a_0;a_1,a_2, a_3,\ldots]$ be the of continued fraction of the natural base. A short calculation give 
		$$K_{\pi}=\lim_{n \leq 10} \left (\prod_{k \leq n} a_k\right )^{1/n}=3.361 \ldots, \, K_{\pi}=\lim_{n \leq 20} \left (\prod_{k \leq n} a_k\right )^{1/n}=2.628\ldots.  $$
		Compute a numerical approximation for the geometric mean value of the partial quotients:
		$$K_{\pi}=\lim_{n \leq 1000} \left (\prod_{k \leq n} a_k\right )^{1/n}=?$$
	}
\end{exe}

\begin{exe} {\normalfont Let $\alpha=[a_0;a_1,a_2, a_3,\ldots]$ be the of continued fraction of a real number. Assume it has a lacunary subsequence of unbounded  partial quotients $a_{n_i}=O(\log \log n)$, such that $n_{i+1}/n_{i}>1$, otherwise $a_{n}=O(1)$. Is the geometric mean value of the partial quotients unbounded
		$$ K_{\alpha}=\lim_{n \to \infty} \left (\prod_{k \leq n} a_k\right )^{1/n}=\infty ?$$
	}
\end{exe}
\subsection{Concatenated Sequences}
\begin{exe} {\normalfont A Champernowne number $\kappa_b=0.123 \cdots b-1\cdot b \cdot b+1 \cdot b+2\cdots$ in base $b\geq 2$ is formed by concatenating the sequence of consecutive integers in base $b$ is irrationality.  
		Show that the number $0.F_0F_1F_2F_3 \ldots =1/F_{11}$ formed by concatenating the sequence of Fibonacci numbers $F_{n+1}=F_n+F_{n-1}$  is rational.}
\end{exe}
\begin{exe} {\normalfont Let $f(x) \in \Z[x]$ be a polynomial, and let $D_n=|f(n)|$. Show that the number $0.D_0D_1D_2D_3 \ldots $ formed by concatenating the sequence of values is irrational.}
\end{exe}
\begin{exe} {\normalfont Let $\{D_n\geq 0: n\geq 0\}$ be an infinite sequence of integers, and let $\alpha=0.D_0D_1D_2D_3 \ldots $ formed by concatenating the sequence of integers. Determine a sufficient condition on the sequence of integers to have an irrational number $\alpha>0$.}
\end{exe}

\subsection{Exact Evaluations Of Power Series}
\begin{exe} {\normalfont The exact evaluation of the first series below is quite simple, but the next series requires some work:
$$L_1(1/2)=\sum_{n \geq1} \frac{1}{2^nn}=\log 2 \quad \text{ and }\quad L_2(1/2)=\sum_{n \geq1} \frac{1}{2^nn^2}=\frac{\pi^2}{12}-\frac{1}{2}\left (\log 2\right )^2. $$
Prove this, use the properties of the polylogarithm function $L_k(z)=\sum_{n \geq 1}z^n/n^k$.} 
\end{exe}
\begin{exe} {\normalfont
		Determine whether or not the series
		$$L_3(1/2)=\sum_{n \geq1} \frac{1}{2^nn^3}$$
		has a closed form evaluation (exact).} 
\end{exe}

\subsection{Flat Hill Class Series}
\begin{exe} \label{exe79000.01}{\normalfont Let $\alpha=(1+\sqrt{5})/2$ be a Pisot number. Show that $||\alpha^n||  \to 0$ as $n \to \infty$. Explain the convergence of the first infinite series 
		$$  \sum_{n \geq 1} \sin|| \alpha^n||  \quad \text{ and } \quad \sum_{n \geq 1} \frac{1}{n^2 \sin|| \alpha^n||}$$ 
		the divergence of the second infinite series.}
\end{exe}
\begin{exe} \label{exe79000.05}{\normalfont Assume that $\pi$ is a normal number base $10$. Show that $||\pi 10^n||$  is uniformly distributed. Explain the convergence or divergence of the infinite series 
		$$
		\sum_{n \geq 1} \frac{1}{n^2 \sin|| \pi 10^n||}$$ .
	}
\end{exe}

\begin{exe} \label{exe79000.10}{\normalfont Assume that $\pi$ is a normal number base $10$. Determine the least parameters $a>1$ and $b>0$  for which the infinite series 
		$$
		\sum_{n \geq 1} \frac{1}{n^a \sin^b|| \pi ^n||}$$ 
		converges. For example, is $a-b>0$ sufficient?
	}
\end{exe}

\begin{exe} \label{exe79000.15}{\normalfont Assume that $\pi$ is a normal number base $10$. Determine the least parameters $a>1$ and $b>0$  for which the infinite series 
		$$
		\sum_{n \geq 1} \frac{1}{n^a \sin^b|| \pi 10^n||}$$ 
		converges. For example, is $a-b>0$ sufficient?
	}
\end{exe}


\currfilename\\

\end{document}